\newtheorem{theorem}{Theorem}[section]
\newtheorem{lemma}[theorem]{Lemma}
\newtheorem{proposition}[theorem]{Proposition}
\newtheorem{corollary}[theorem]{Corollary}
\newtheorem{definition}[theorem]{Definition}
\newenvironment{ack}
{\begin{trivlist}  \item \textsc{Acknowledgments}~} {\end{trivlist}}
\newenvironment{proof}
{\begin{trivlist}  \item \textsc{Proof:}~} {\hfill $\Box$
\end{trivlist}}
\newenvironment{proof of claim}
{\begin{trivlist}  \item \textsc{Proof of Claim:}~} {\hfill $\Box$
\end{trivlist}}
\newcommand{\closure}[1]{\ensuremath{\mathrm{cl}}(#1)}
\newcommand{\interior}[1]{\ensuremath{\mathrm{int}}(#1)}
\def \st {\operatorname{st}}
\def \R{\mathbb{R}}
\def \Q{\mathbb{Q}}
\def \N{\mathbb{N}}
\def \ind{\operatorname{ind}}
\def \supp {\operatorname{supp}}
\newcommand{\stinv}[1]{\ensuremath{\mathrm{st}^{-1}}(#1)}
\newcommand{\ma}{\mathfrak{m}}
\begin{document}
\author{
Jana Ma\v{r}\'{i}kov\'{a}}
\title{The structure on the real field generated by the standard part map on an
o-minimal expansion of a real closed field}
\maketitle

\begin{abstract}
Let $R$ be a sufficiently saturated o-minimal expansion of a real
closed field, let $\mathcal{O}$ be the convex hull of $\mathbb{Q}$
in $R$, and let $\text{st} : \mathcal{O}^n \rightarrow \mathbb{R}^n$
be the standard part map. For $X \subseteq R^n$ define
$\st{X}:=\st{(X \cap \mathcal{O}^n )}$. We let $\mathbb{R}_{\ind}$
be the structure with underlying set $\mathbb{R}$ and expanded by
all sets of the form $\st{X}$, where $X \subseteq R^{n}$ is
definable in $R$ and $n=1,2,\dots$. We show that the subsets of
$\mathbb{R}^n$ that are definable in $\mathbb{R}_{\ind}$ are exactly
the finite unions of sets of the form $\st{X}\setminus \st{Y}$,
where $X,Y \subseteq R^n$ are definable in $R$. A consequence of
the proof is a partial answer to a question by Hrushovski,
Peterzil and Pillay about the existence of measures with certain
invariance properties on the lattice of bounded
definable sets in $R^n$.
\end{abstract}

\begin{section}{Introduction}

\noindent Throughout, $\mathbb{N}=\{0,1,2,\dots\}$ and $m,n$ range
over $\mathbb{N}$.

\medskip\noindent
Let $R$ be an o-minimal expansion of an ordered field (necessarily real closed), let $\mathcal{O}=\{a\in R:\ |a| \le n \text{ for some }n\}$
be the convex hull of $\mathbb{Q}\subseteq R$ in $R$,
and let $\ma$ be the maximal ideal of the valuation ring
$\mathcal{O}$ of $R$, so $\ma=\{a\in R:\ |a| \le 1/n \text{ for all }n>0\}$.
Let $\st: \mathcal{O} \to \mathbb{R}$ be the standard part map; it has
kernel $\ma$ and induces for each $n$ a corresponding
standard part map $\st: \mathcal{O}^n \to \mathbb{R}^n$. For $X\subseteq R^n$
we set $\st(X):= \st(X\cap \mathcal{O}^n)$.

From now on we assume that $R$ is $(2^{\aleph_0})^+$-saturated. In particular, the map
$\st: \mathcal{O} \to \R$ is surjective, and if
$X\subseteq R^3$ is the graph of the addition operation of $R$, then $\st(X)\subseteq \R^3$ is the graph of the addition operation of $\R$.  The same is true for multiplication instead of addition.

By {\em definable\/} we shall mean
{\em definable with parameters in the structure $R$}, unless specified
otherwise.
If another ambient structure is specified, then {\em definable\/} also means {\em definable with parameters\/} (in that structure).

Via the standard part map the definable sets of $R$ induce a
structure on $\R$ as follows: let $\R_{\ind}$ be the structure
with underlying set $\R$ and with
the sets $\st(X)$ with definable $X\subseteq R^n$, $n=0,1,2,\dots$,
 as basic
relations. Since the graphs of the addition and multiplication
 on $\R$
are among these basic relations, and the usual ordering of $\R$
 is $0$-definable
from addition and multiplication, we may view $\R_{\ind}$ as an
expansion of the ordered field of real numbers, and we shall do
so. It follows from a theorem by Baisalov and Poizat \cite{bp}
that $\R_{\ind}$ is o-minimal;
 this was observed
by Hrushovski, Peterzil and Pillay \cite{nip}, but their argument
left open how logically complicated the definable relations of
$\R_{\ind}$ can be, compared to the basic relations. We answer this
question here as follows:

\begin{theorem}\label{intrthm} The subsets of $\R^n$ definable in $\R_{\ind}$
are exactly the
finite unions of differences $\st(X) \setminus \st(Y)$ with
definable $X,Y\subseteq R^n$.
\end{theorem}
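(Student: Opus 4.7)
Let $\mathcal{C}$ denote the family of finite unions of differences $\st(X) \setminus \st(Y)$ with $X, Y \subseteq R^n$ definable. One inclusion in the theorem is immediate: each difference $\st(X) \setminus \st(Y)$ is $\R_{\ind}$-definable by construction. For the reverse inclusion, since $\mathcal{C}$ contains every basic relation (take $Y = \emptyset$) and the $\R_{\ind}$-definable sets are generated from the basic relations by Boolean operations and projections $\pi : \R^{n+1} \to \R^n$, it suffices to show that $\mathcal{C}$ is closed under Boolean operations and under $\pi$. The theorem then follows by an induction on the complexity of an $\R_{\ind}$-formula.

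For Boolean closure, finite unions are trivial; complements rest on the identity $\R^n \setminus \st(X) = \st(R^n) \setminus \st(X)$ and de Morgan; and after distributing, everything reduces to the key sublemma that for any definable $X_1, X_2 \subseteq R^n$ the intersection $\st(X_1) \cap \st(X_2)$ equals $\st(Z)$ for some definable $Z \subseteq R^n$. My plan is to set $Z_\epsilon := \{y \in X_1 : d(y, X_2) < \epsilon\}$, definable in $R$ via the o-minimally definable distance function, and to select $\epsilon^* \in \ma^{>0}$ via $(2^{\aleph_0})^+$-saturation so as to realize the partial type asserting that $\epsilon$ is infinitesimal and that for every $x \in \st(X_1) \cap \st(X_2)$ and every $k \ge 1$, some $y \in X_1$ satisfies $|y - x| < 1/k$ and $d(y, X_2) < \epsilon$. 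Finite satisfiability holds because finitely many $x_i$'s produce finitely many infinitesimal witness distances $|y_i - y_i'|$, which a single still-infinitesimal $\epsilon$ can dominate. A second saturation step produces, for each $x$ in the intersection, a point of $Z_{\epsilon^*}$ whose standard part is $x$, yielding $\st(Z_{\epsilon^*}) = \st(X_1) \cap \st(X_2)$.

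The main obstacle will be closure under projection. By distributing over unions, the task reduces to showing that $\pi(\st(X) \setminus \st(Y)) \in \mathcal{C}$, i.e., that the fiberwise set $\{x \in \R^n : \st(X)_x \not\subseteq \st(Y)_x\}$ is a finite union of differences. The principal subtlety is that $\pi$ does not commute with $\st$ when vertical fibers of $X$ escape to infinity: for $X = \{(s, 1/s) : s > 0\} \subseteq R^2$, one has $\pi(\st(X)) = (0, \infty)$ but $\st(\pi(X)) = [0, \infty)$. My plan is to decompose $X \cup Y$ via cell decomposition in $R$ so that $\pi$ behaves definably and uniformly on each cell --- as a homeomorphism on graph cells and as a trivial interval bundle on band cells --- and then to treat separately the ``bounded-fiber'' contribution, where a direct commutation of $\pi$ with $\st$ reduces matters to the intersection sublemma, and the ``unbounded-fiber'' contribution, namely the locus of $x$ where the $X$-fibers diverge to infinity. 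The hardest technical step, which I expect will occupy most of the argument, will be showing that this unbounded-fiber contribution is itself a difference of basic relations, via a saturation argument crafted to detect when a fiber has an $\mathcal{O}$-point.
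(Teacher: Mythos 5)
Your Boolean-closure argument is fine and essentially recovers the paper's Lemma~\ref{stan1} (the intersection sublemma $\st(X_1)\cap\st(X_2)=\st(Z)$); your saturation proof of it is the paper's in slight disguise. The real gap is in the projection step, and it is not quite where you place it. The dichotomy ``bounded fibers versus fibers escaping to infinity'' handles your $1/s$ example, but it misses a second obstruction that is present even when $X$ is a \emph{bounded} graph cell $\Gamma f$: the set $\st(\Gamma f)$ need not be a graph. When a partial derivative of $f$ takes infinite values, $\st(\Gamma f)$ acquires vertical segments over certain points of $\R^n$ (thick fibers), and your reduction --- commute $\pi$ with $\st$ and invoke the intersection sublemma --- only goes through on the locus where $\st(\Gamma f)$ is the graph of a function. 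To proceed you would have to show the thick-fiber locus has empty interior and recurse on it. That is precisely where the paper brings in a nontrivial measure-theoretic input (Lemma~\ref{Qdensitylemma}, from Berarducci--Otero): if $\st(Y)$ has nonempty interior then $Y$ contains a $\Q$-box. Applied to $P(i)=\{a : (\partial f/\partial x_i)(a) > \Q\}$ it forces $\st(P(i))$ to have empty interior, since $f$ cannot be $\Q$-bounded on a $\Q$-box inside $P(i)$. Without this tool or an equivalent, your ``hardest technical step'' does not close, and I do not see a purely saturation-theoretic substitute for it.

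There is also a structural mismatch worth noting. The paper never tries to project a single difference $\st(X)\setminus\st(Y)$; instead it proves good cell decomposition (Theorem~\ref{thm} and Corollary~\ref{reduction}): any finite family of sets $\st(X_i)$ can be \emph{simultaneously} partitioned by good cells, which are differences of standard-part sets and whose images under $\pi^{n+1}_n$ are again good cells by the very definition of a good decomposition. Projection-closure then falls out by projecting a partition, never a difference. Your plan --- decompose $X\cup Y$ into cells in $R$ and then analyze the standard parts cell by cell --- would, to succeed, essentially have to reproduce this machinery: the induction $(A_n)\Rightarrow(B_n)\Rightarrow(A_{n+1})$, where $(B_n)$ asserts that after a suitable good decomposition each bounded definable function induces a genuine continuous function on each open good cell, is exactly the disciplined version of what you are gesturing at, and $(B_n)$ is where the $\Q$-box lemma is indispensable.
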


\noindent
This result is obtained without using the
Baisalov-Poizat theorem, and thus gives another proof of the fact
that $\R_{\ind}$ is o-minimal. A previously known special case of
Theorem~\ref{intrthm} is when $R$ is an elementary extension of an o-minimal
expansion $\R^{\#}$ of the ordered field of real numbers; see
\cite{tconvex}. (The key fact in that case is that $\R_{\ind}$ and
$\R^{\#}$ have the same definable relations.)

\medskip\noindent
The proof of the theorem goes as follows. We single out
certain subsets of $\R^n$ as {\em good cells}; they have the form
$\st(X) \setminus \st(Y)$ with definable $X,Y\subseteq R^n$, and for
$n>0$ the image of a good cell in $\R^n$ under the projection map
$(x_1,\dots, x_n)\mapsto (x_1,\dots,x_{n-1})$ is a good cell in
$\R^{n-1}$. The main step is to show by induction on $n$ that for
any definable $X\subseteq R^n$ the set $\st(X)$ is a finite union of
good cells. More precisely, we have ``good cell decomposition'',
Corollary~\ref{reduction}. The theorem above then follows easily.

We also show that the closed subsets of $\R^n$ definable
in $\R_{\ind}$ are exactly the sets $\st(X)$ with definable
$X\subseteq R^n$.

As a consequence of a strengthening of good cell decomposition we
obtain a partial answer to a question posed in \cite{nip}, which is
roughly as follows. Let $B[n]$ be the lattice of all
bounded definable subsets of $R^n$, and define $X, Y \in B[n]$ to be
{\em isomorphic\/} iff, modulo a set of dimension $<n$, we have
$\psi (X) = Y$ for some definable $C^1$-diffeomorphism $\psi$ with
$|J \psi (x)|=1$ for all $x\in X$. Let $X \in B[n]$ have nonempty
interior. Is there a finitely additive $\mu : B[n]\rightarrow
[0,\infty]$ with $0< \mu (X) < \infty$ which is invariant under
isomorphisms?

Our partial result is that there is such a $\mu$ if
$X\subseteq \mathcal{O}^n$ and $\st(X)$ has nonempty interior.
This follows by proving that the measure introduced by Berarducci
and Otero in \cite{bo} on the lattice of definable sets contained in
$\mathcal{O}^n$ is invariant under isomorphism.
The main point here is that the standard part of a partial derivative of
a definable function is almost everywhere equal to the corresponding
partial derivative of the standard part of the function.

\bigskip\noindent
{\bf Further notations and terminology.}
An {\em interval} is always a {\em nonempty open interval}
$(a,b)$,
and intervals are in
$R$ or in $\mathbb{R}$, as specified. For $m\le n$ we let
$p^n_m: R^n \to R^m$ and
$\pi^n_m: \mathbb{R}^n \to \mathbb{R}^m$ be given by
$$p^n_m(x_1,\dots,x_n)=(x_1,\dots,x_m), \qquad
\pi^n_m(x_1,\dots,x_n)=(x_1,\dots,x_m).$$ The {\em hull\/} of a set
$C\subseteq \mathbb{R}^n$ is by definition the clopen set $C^h:=
\stinv{C}\subseteq \mathcal{O}^n$. A point $x$ in $R^n$ or $\R^n$
has components $x_1,\dots,x_n$, that is, $x=(x_1,\dots,x_n)$.

Let $f : X \rightarrow R$, where $X\subseteq R^n$.
Then the graph of $f$ as a subset of $R^{n+1}$ is denoted by $\Gamma{f}$, and
we put
\begin{align*} (-\infty, f)&:=\{(x,y)\in R^{n+1} : x \in X \, \& \,
y < f(x) \},\\
     (-\infty, f]&:=\{(x,y)\in R^{n+1} : x \in X \, \& \,
y \le f(x) \},\\
     (f,+\infty)&:= \{(x,y)\in R^{n+1} : x \in X \, \& \,  f(x)<y \},\\
     [f,+\infty)&:= \{(x,y)\in R^{n+1} : x \in X \, \& \,  f(x) \le y \}.
\end{align*}
When also $g : X \rightarrow R$, then
``$f<g$'' abbreviates ``$f(x) < g(x)$ for all $x \in X$'' and if $f<g$ we put
$$(f,g) := \{
(x,y)\in R^{n+1} : x \in X \, \& \, f(x) < y < g(x) \}.$$
Likewise, functions $X\to \mathbb{R}$ with
$X\subseteq \mathbb{R}^n$ give rise to subsets of $\mathbb{R}^{n+1}$ that we
denote in the same way.
A $\mathbb{Q}$-box in $R^n$ is a cartesian product
$$I_1 \times \dots \times I_n\subseteq R^n$$ of intervals $I_j$ in $R$
whose endpoints lie in $\Q$. Any unexplained terminology or notation
is from \cite{lou}.

\section{Basic facts about standard part sets}

\noindent
It is easy to see that if
$X\subseteq R^n$ is definable in $R$, then $\st{X}$ is closed in
$\mathbb{R}^n$. Let $\text{St}_n$ be the collection
of all sets $\st{X}$ with definable $X\subseteq R^n$.

Note: if $X,Y\in \text{St}_n$, then $X\cup Y\in \text{St}_n$; if
$X\in \text{St}_m$ and $Y\in \text{St}_n$, then $X\times Y\in
\text{St}_{m+n}$.
The next lemma is almost obvious, with (1) a special case of (2). To state it
we use the projection maps $\pi=\pi^{m+n}_m: \mathbb{R}^{m+n}\to \mathbb{R}^m$ and $p=p^{m+n}_m: R^{m+n} \to R^m$.

\begin{lemma}\label{stan0} Let $X\in \operatorname{St}_{m+n}$. Then
\begin{enumerate}
\item[$(1)$] if $X$ is bounded, then $\pi(X)\in \operatorname{St}_m$,
\item[$(2)$] if $X=\st{X'}$ where the set $X'\subseteq R^{m+n}$ is
definable in $R$ and satisfies $X'\cap
p^{-1}(\mathcal{O}^m)\subseteq \mathcal{O}^{m+n}$, then $\pi(X)\in
\operatorname{St}_m$.
\end{enumerate}
\end{lemma}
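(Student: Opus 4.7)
The plan is to prove (2) directly and deduce (1) from it by a truncation. For (1), if $X\subseteq \mathbb{R}^{m+n}$ is bounded, pick $N\in\mathbb{N}$ with $X\subseteq [-N,N]^{m+n}$ and any definable $X'\subseteq R^{m+n}$ with $\st(X')=X$; replacing $X'$ by $X'\cap [-N,N]^{m+n}$ keeps $\st(X')=X$ and places $X'$ entirely inside $\mathcal{O}^{m+n}$, so the hypothesis of (2) is trivially satisfied.

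For (2) the whole content is the identity
\[
\pi(X) \;=\; \st\bigl(p(X')\bigr),
\]
which immediately yields $\pi(X)\in \operatorname{St}_m$ since $p(X')\subseteq R^m$ is definable in $R$. The inclusion $\pi(X)\subseteq \st(p(X'))$ is a routine unpacking: a point $a\in\pi(X)$ equals $\pi(c)$ for some $c\in X=\st(X'\cap\mathcal{O}^{m+n})$, so $c=\st(c')$ for some $c'\in X'\cap\mathcal{O}^{m+n}$; then $p(c')\in p(X')\cap\mathcal{O}^m$ and $\st(p(c'))=\pi(\st(c'))=\pi(c)=a$.

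The reverse inclusion is where the hypothesis of (2) does actual work, and is the only genuine step. Given $a'\in p(X')\cap \mathcal{O}^m$ with $\st(a')=a$, choose $b'\in R^n$ with $(a',b')\in X'$; since $a'\in\mathcal{O}^m$ we have $(a',b')\in X'\cap p^{-1}(\mathcal{O}^m)$, and the hypothesis forces $(a',b')\in\mathcal{O}^{m+n}$, i.e.\ $b'\in\mathcal{O}^n$. Thus $\st(a',b')=(a,\st(b'))\in \st(X'\cap\mathcal{O}^{m+n})=X$, and projecting gives $a\in\pi(X)$. Without the hypothesis the fibre of $X'$ over a finite $a'$ could lie entirely outside $\mathcal{O}^n$, so $a'$ would be visible in $p(X')\cap\mathcal{O}^m$ while contributing nothing to $X$; the stated condition rules out exactly this pathology. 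I expect no substantial obstacle, since the whole argument is a diagram chase through the definitions of $\st$, $p$, and $\pi$.
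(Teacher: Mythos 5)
Your approach agrees with what the paper intends: the paper gives no proof here, remarking only that the lemma is ``almost obvious, with (1) a special case of (2),'' and your diagram chase for (2) together with the truncation reducing (1) to (2) supplies exactly the detail the paper left unstated.

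There is one small slip in the reduction of (1) to (2). Replacing $X'$ by $X'\cap[-N,N]^{m+n}$ need not preserve $\st(X')=X$ when $X$ meets the boundary of $[-N,N]^{m+n}$: a witness $c\in X'\cap\mathcal{O}^{m+n}$ of a boundary point $a$ of $X$ may have some $|c_i|$ infinitesimally greater than $N$, so it is discarded by the intersection and $a$ may then have no remaining witness. Concretely, with $m+n=1$, $X=\{1\}$, $X'=\{1+\varepsilon\}$ for a fixed positive infinitesimal $\varepsilon$, and $N=1$, one gets $X'\cap[-1,1]=\emptyset$, whence $\st\bigl(X'\cap[-1,1]\bigr)=\emptyset\ne X$. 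The fix is immediate: since $X$ is bounded you may choose $N$ with $X\subseteq(-N,N)^{m+n}$; then any $c\in X'\cap\mathcal{O}^{m+n}$ with $\st(c)\in X$ satisfies $|c_i|<N$ for every $i$, so the truncation loses nothing. With that adjustment your proof of (1), and your proof of (2), are correct.
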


\begin{lemma}\label{stan1} If $X,Y\in \operatorname{St}_n$, then
$X\cap Y\in \operatorname{St}_n$.
\end{lemma}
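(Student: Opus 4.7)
The plan is to exhibit a single definable set $Z \subseteq R^n$ with $\st(Z) = X \cap Y$, where $X = \st(X')$ and $Y = \st(Y')$ for definable $X', Y' \subseteq R^n$. The candidate $Z$ will be $X'$ intersected with an infinitesimal neighborhood of $Y'$, and the right infinitesimal width will come from saturation applied to a carefully chosen partial type. Throughout, I use the sup-norm $\|\cdot\|_\infty$, which is definable in $R$.

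For each $r \in \R^n$, let $\varphi_r(\delta)$ be the formula
$$\exists x \in X'\, \exists y \in Y'\, (\|x - r\|_\infty < \delta\ \wedge\ \|x - y\|_\infty < \delta),$$
with parameter $r$ together with the finitely many defining parameters of $X'$ and $Y'$. I introduce the partial type $\Sigma(\delta)$ consisting of $\delta > 0$, the formulas $\delta < 1/N$ for each positive integer $N$, and $\varphi_r(\delta)$ for every $r \in X \cap Y$. Its parameter set has cardinality at most $2^{\aleph_0}$. Finite satisfiability is direct: given $r_1, \dots, r_k \in X \cap Y$, each $r_i$ yields some $x_i \in X' \cap \mathcal{O}^n$ and $y_i \in Y' \cap \mathcal{O}^n$ with $\st(x_i) = \st(y_i) = r_i$, so $\|x_i - r_i\|_\infty$ and $\|x_i - y_i\|_\infty$ all lie in $\ma$, and any $\delta \in \ma$ exceeding this finite collection of infinitesimals realizes the finite subtype. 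By $(2^{\aleph_0})^+$-saturation, $\Sigma$ is realized by some $\delta \in R$.

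With this $\delta$ fixed, I set $Z := \{x \in X' : \exists y \in Y'\, \|x - y\|_\infty < \delta\}$, which is definable in $R$. For the inclusion $\st(Z) \subseteq X \cap Y$: if $x \in Z \cap \mathcal{O}^n$, then $x \in X'$ gives $\st(x) \in X$, and any witness $y \in Y'$ satisfies $\|x - y\|_\infty < \delta \in \ma$, so $y \in \mathcal{O}^n$ with $\st(y) = \st(x)$, hence $\st(x) \in Y$. For the inclusion $X \cap Y \subseteq \st(Z)$: for $r \in X \cap Y$, the realized formula $\varphi_r(\delta)$ supplies $x \in X'$ with $\|x - r\|_\infty < \delta \in \ma$, forcing $x \in \mathcal{O}^n$ and $\st(x) = r$, together with a witness $y \in Y'$ showing $x \in Z$; thus $r \in \st(Z)$.

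The principal obstacle, and the reason the argument is not immediate, is that the naive description
$$X \cap Y = \bigcap_{N \in \N,\, N>0} \st\bigl(\{x \in X' : \exists y \in Y'\, \|x-y\|_\infty < 1/N\}\bigr)$$
exhibits $X \cap Y$ only as a countable intersection of sets in $\operatorname{St}_n$, not as a single member of $\operatorname{St}_n$. Saturation compresses this chain into one definable set by supplying an infinitesimal $\delta$ that is simultaneously below every $1/N$ and yet still large enough to dominate, for each of up to $2^{\aleph_0}$-many points $r \in X \cap Y$, some infinitesimal distance between a pair of $\st$-witnesses at $r$. This is exactly the step where the assumption of $(2^{\aleph_0})^+$-saturation is used essentially.
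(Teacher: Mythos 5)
Your proof is correct in substance and rests on the same key idea as the paper's: apply $(2^{\aleph_0})^+$-saturation to a partial type over at most continuum-many parameters to produce a single positive infinitesimal that simultaneously dominates, for each $r\in X\cap Y$, the relevant infinitesimal distances, thereby compressing the countable chain of approximating sets into one definable set. Where you diverge is the final step. The paper sets $Z:=\{(x,y)\in X'\times Y':d(x,y)<\varepsilon\}\subseteq R^{2n}$, observes that $X\cap Y=\pi^{2n}_n(\st(Z))$, and then invokes Lemma~\ref{stan0}(2) to conclude $X\cap Y\in\operatorname{St}_n$. You instead take the projection already inside $R$, defining $Z:=\{x\in X':\exists y\in Y'\,\|x-y\|_\infty<\delta\}\subseteq R^n$, and verify $\st(Z)=X\cap Y$ by hand. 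This makes the proof self-contained (no appeal to Lemma~\ref{stan0}) at the cost of a slightly richer type: your $\varphi_r$ must also require the witness $x$ to lie within $\delta$ of (a representative of) $r$, a condition the paper avoids precisely because the projection lemma absorbs it.

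One detail needs patching. As written, $\varphi_r(\delta)$ treats $r\in\R^n$ as a parameter, but $\R$ is not a subset of $R$ (only $\Q$ is), so $\|x-r\|_\infty$ with $x\in R^n$ and $r\in\R^n$ is not a formula over $R$. The repair is routine and does not change the argument: for each $r\in X\cap Y$ fix some $b_r\in\stinv{r}\subseteq\mathcal{O}^n$ (surjectivity of $\st$ uses the same saturation hypothesis) and replace $r$ by $b_r$ in $\varphi_r$; the parameter set $\{b_r:r\in X\cap Y\}$ together with the parameters of $X',Y'$ still has cardinality at most $2^{\aleph_0}$, and in the last inclusion $\|x-b_r\|_\infty<\delta\in\ma$ forces $x\in\mathcal{O}^n$ with $\st(x)=\st(b_r)=r$, as you intend.
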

\begin{proof} Let $X,Y\in \text{St}_n$, and take definable
$X', Y'\subseteq R^n$ such that $\st(X')=X$ and $\st(Y')=Y$. For
each $a\in X\cap Y$ take $x_a\in X'$ and $y_a\in Y'$ such that
$\st(x_a)=\st(y_a)=a$. By saturation (in a cardinal $>2^{\aleph_0}$)
we can take an infinitesimal $\varepsilon\in R^{>0}$ such that
$d(x_a,y_a)< \varepsilon$ for all $a\in X \cap Y$. Hence, with
$$Z:= \{(x,y)\in X'\times Y': d(x,y) < \varepsilon\}\subseteq R^{2n},$$
$Z$ is definable and $X\cap Y$ is the image of
$\st(Z)\subseteq \mathbb{R}^{2n}$ under the projection map
$\pi^{2n}_n: \mathbb{R}^{2n} \to
\mathbb{R}^n$. Now apply (2) of Lemma~\ref{stan0}.
\end{proof}

\begin{lemma}\label{stan1a} Let $X\subseteq R^n$ and $f: X \to R$ be
definable, and put
$$  X^{-}:=\{x\in X:\ f(x) < \mathbb{Q}\}, \quad
X^{+}:=\{x\in X:\ f(x)> \mathbb{Q}\}.$$ Their standard parts $\st
(X^{-})$ and $\st(X^{+})$ belong to $\operatorname{St}_n$.
\end{lemma}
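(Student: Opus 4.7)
The plan is to reduce to showing $\st(X^-) \in \operatorname{St}_n$, since $\st(X^+)$ then follows by replacing $f$ with $-f$. For each $N \in \N$, the definable set $X_N := \{x \in X : f(x) < -N\}$ has $\st(X_N) \in \operatorname{St}_n$ trivially, and $X^- = \bigcap_N X_N$. However, Lemma~\ref{stan1} only supplies \emph{finite} intersections in $\operatorname{St}_n$, so the crux will be to bypass this countable intersection by exhibiting a single definable set whose standard part equals $\st(X^-)$.

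The strategy is to replace the standard thresholds $-N$ by a single infinite threshold $-r \in R$. First, for each $a \in \st(X^-)$ I fix a lift $x_a \in X^- \cap \mathcal{O}^n$ with $\st(x_a) = a$; then $-f(x_a) > \N$. Next, I consider the partial type
\[
\{\, r > N : N \in \N \,\} \;\cup\; \{\, r < -f(x_a) : a \in \st(X^-) \,\}.
\]
It uses parameters of cardinality at most $2^{\aleph_0}$ (since $|\st(X^-)| \le 2^{\aleph_0}$), and any finite subset is satisfied by taking $r$ equal to a sufficiently large standard integer, using that each $-f(x_a)$ is positive infinite. By the $(2^{\aleph_0})^+$-saturation of $R$ the type is realized, producing $r \in R$ with $r > \N$ and $r < -f(x_a)$ for every $a \in \st(X^-)$.

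Finally, I set $Z := \{x \in X : f(x) < -r\}$, a definable subset of $R^n$. If $x \in Z \cap \mathcal{O}^n$, then $f(x) < -r < -N$ for every $N \in \N$, so $f(x) < \Q$ and $x \in X^-$; hence $\st(Z) \subseteq \st(X^-)$. Conversely, by the construction of $r$, each chosen $x_a$ satisfies $f(x_a) < -r$, so $x_a \in Z$ and $a = \st(x_a) \in \st(Z)$. Thus $\st(X^-) = \st(Z) \in \operatorname{St}_n$. The main obstacle is precisely the second step: choosing a single infinite $r$ that works uniformly across the (possibly uncountable) family $\{x_a\}_{a \in \st(X^-)}$. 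This is where the $(2^{\aleph_0})^+$-saturation of $R$ is genuinely needed, in direct analogy with the use of saturation in the proof of Lemma~\ref{stan1}.
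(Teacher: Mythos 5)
Your proof is correct but takes a genuinely different route from the paper's.

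The paper's proof avoids a fresh saturation argument by using an inversion trick: it introduces the definable set $Y := \{(x,y)\in X\times R:\ f(x)<0,\ f(x)\cdot y=1\}$, observes that $f(x)<\Q$ is equivalent (for $f(x)<0$) to $1/f(x)$ being infinitesimal, and recovers $\st(X^-)$ as $\pi^{n+1}_n\big(\st(Y)\cap(\R^n\times\{0\})\big)$; the needed saturation is then implicit in the appeal to Lemma~\ref{stan1}. You instead invoke $(2^{\aleph_0})^+$-saturation directly to pick a single infinite threshold $r$ with $\N < r < -f(x_a)$ for every $a\in\st(X^-)$, set $Z:=\{x\in X: f(x)<-r\}$, and check both inclusions $\st(Z)\subseteq\st(X^-)$ and $\st(X^-)\subseteq\st(Z)$. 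This is fully rigorous (the finite satisfiability check and the parameter count $\le 2^{\aleph_0}$ are handled correctly), and it is in fact slightly more self-contained: you end with a single definable set $Z$ whose standard part is $\st(X^-)$, so you never need Lemma~\ref{stan1} or Lemma~\ref{stan0}. The paper's version buys reuse of already-proved lemmas and keeps the saturation argument in one place; yours is more hands-on but shorter in its dependency chain. Both are standard idioms in this area and equally acceptable.
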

\begin{proof} To get $\st(X^{-})\in \text{St}_n$, use Lemma~\ref{stan1},
the fact that
$$Y:= \{(x,y)\in X\times R:\ f(x)<0,\  f(x)\cdot y=1\}\subseteq R^{n+1}$$
is definable, and
$$\st(X^{-})\ =\ \pi^{n+1}_n\big((\st{Y}) \cap
(\mathbb{R}^n \times \{0\})\big).$$ In the same way we see that
$\st(X^{+})\in \text{St}_n$.
\end{proof}

\begin{lemma}\label{stan2} If $X\subseteq R$ is definable, then
$\operatorname{st}(X)$
is a finite union of intervals and points in $\mathbb{R}$.
\end{lemma}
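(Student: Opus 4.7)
The plan is to apply the o-minimality of $R$ to decompose $X$ into finitely many points and open intervals, and then compute $\st$ on each piece. Because $\st$ commutes with finite unions, this reduces the lemma to analyzing two cases: a singleton $\{x\}$ (whose standard part is $\{\st(x)\}$ if $x \in \mathcal{O}$ and $\emptyset$ otherwise) and an open interval.

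For an open interval $I=(a,b) \subseteq R$ with $a<b$ in $R \cup \{-\infty,+\infty\}$, I would case-split on where each endpoint lies relative to $\mathcal{O}$: below $\mathbb{Q}$ (meaning $-\infty$, or a negatively infinite element of $R$), inside $\mathcal{O}$, or above $\mathbb{Q}$. The admissible combinations are:
\begin{enumerate}
\item[(i)] $a, b$ both below $\mathbb{Q}$, or both above $\mathbb{Q}$: $I \cap \mathcal{O} = \emptyset$, so $\st(I) = \emptyset$;
\item[(ii)] $a$ below $\mathbb{Q}$, $b \in \mathcal{O}$: $\st(I) = (-\infty, \st(b)]$;
\item[(iii)] $a \in \mathcal{O}$, $b$ above $\mathbb{Q}$: $\st(I) = [\st(a), +\infty)$;
\item[(iv)] $a$ below $\mathbb{Q}$, $b$ above $\mathbb{Q}$: $\st(I) = \mathbb{R}$;
\item[(v)] $a, b \in \mathcal{O}$: $\st(I) = [\st(a), \st(b)]$, which collapses to $\{\st(a)\}$ exactly when $b-a$ is infinitesimal.
\end{enumerate}
In every case the output is a point or an interval in $\mathbb{R}$ (possibly unbounded, possibly a singleton), so $\st(X)$ as a whole is a finite union of such pieces.

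The only nontrivial verification is that $\st(a)$ and $\st(b)$ actually belong to $\st(I)$ when $a,b \in \mathcal{O}$. For $\st(a)$, pick a positive infinitesimal $\epsilon \in R$ with $\epsilon < b-a$: take $\epsilon = (b-a)/2$ if $b-a$ is itself infinitesimal, otherwise take any positive infinitesimal (which exists by saturation). Then $a+\epsilon \in I \cap \mathcal{O}$ with $\st(a+\epsilon) = \st(a)$; symmetrically for $b$. Interior points of $[\st(a),\st(b)]$ come for free from the surjectivity of $\st: \mathcal{O} \to \mathbb{R}$ noted at the start of the paper. The main obstacle is thus organizational rather than conceptual: enumerating the endpoint-type combinations and writing down the correct ray, interval, or singleton for each.
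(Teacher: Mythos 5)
Your proof is correct and follows the same route as the paper, which simply states that the lemma is ``immediate from the o-minimality of $R$''; your case analysis on the endpoint types is precisely the routine verification that this remark elides.
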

\begin{proof} This is immediate from the o-minimality of $R$.
\end{proof}

\section{Good cells}

\noindent
The following notion turns out to be very useful.

\begin{definition}
Given functions $f: X \rightarrow R$ with $X \subseteq R^n$, and $g:C
\rightarrow \R$ with $C \subseteq \R^n$, we say that $f$ {\em
induces\/} $g$ if $f$ is definable $($so $X$ is definable$)$,
$C^h\subseteq X$, $f|C^h$ is continuous, $f(C^{h}) \subseteq \mathcal{O}$ and
$\Gamma g = \st(\Gamma f) \cap (C\times \mathbb{R})$.
\end{definition}


\begin{lemma} Let $C \subseteq \mathbb{R}^{n}$ and suppose $g: C \rightarrow \mathbb{R}$ is induced by the function $f: X \rightarrow R$ with
$X\subseteq R^n$. Then $g$ is continuous.
\end{lemma}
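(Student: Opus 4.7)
The plan is to verify continuity of $g$ at an arbitrary point $c\in C$ directly from the definition, using the fact (noted at the start of Section~2) that standard parts of definable sets are closed in $\R^n$.

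The first step is to extract an immediate consequence of $\Gamma g=\st(\Gamma f)\cap(C\times\R)$: for every $a\in\st^{-1}(c)$ one has $\st(f(a))=g(c)$. Indeed, $\st(a)=c\in C$ means $a\in C^h\subseteq X$, so $f(a)\in\mathcal{O}$, and then $\st(a,f(a))=(c,\st(f(a)))$ lies in $\st(\Gamma f)\cap(C\times\R)=\Gamma g$. Equivalently, $f(a)-g(c)$ is infinitesimal for every $a\in\st^{-1}(c)$.

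Given $\varepsilon>0$ in $\R$, I would form the definable set
$$B_\varepsilon:=\{a\in X:\ |f(a)-g(c)|\ge\varepsilon/2\}\subseteq R^n,$$
which is definable with the parameter $g(c)\in\mathcal{O}\subseteq R$. By the first step $B_\varepsilon$ is disjoint from $\st^{-1}(c)$, so $c\notin\st(B_\varepsilon)$. Since $\st(B_\varepsilon)$ is closed in $\R^n$, there exists an open $\R^n$-neighborhood $U$ of $c$ with $U\cap\st(B_\varepsilon)=\emptyset$.

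To conclude, for any $c'\in U\cap C$ I pick, using saturation, some $a'\in\st^{-1}(c')\subseteq C^h$. Since $\st(a')=c'\notin\st(B_\varepsilon)$ we must have $a'\notin B_\varepsilon$, whence $|f(a')-g(c)|<\varepsilon/2$; applying the first step at $c'$ gives $\st(f(a'))=g(c')$, and taking standard parts yields $|g(c')-g(c)|\le\varepsilon/2<\varepsilon$. There is no real obstacle in this argument: the substantive observation is the first step, which identifies $g(c)$ with $\st(f(a))$ for every $a$ in the fiber $\st^{-1}(c)$; everything else is a formal use of closedness of standard part sets and surjectivity of $\st:\mathcal{O}^n\to\R^n$.
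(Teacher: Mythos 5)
Your argument is correct but takes a genuinely different route from the paper's. The paper proves continuity by contradiction: assuming $g$ fails to be continuous at $x$, it lifts a sequence of counterexamples into $R$ and invokes saturation directly to produce a point $z\in\{x\}^h$ with $|f(y)-f(z)|\ge\epsilon$, contradicting that $\st(\Gamma f)\cap(C\times\R)$ is the graph of a function. You instead work forwards: you isolate the identity $\st(f(a))=g(\st a)$ for $a\in C^h$, and then use the closedness of standard part sets (the fact asserted at the start of Section~2) to manufacture an explicit neighborhood $U$ of $c$ on which $g$ varies by at most $\varepsilon$. This localizes the appeal to saturation inside the already-cited closedness fact and yields a direct $\varepsilon$--$\delta$ verification of continuity rather than an argument by contradiction; the two proofs are about equally short, but yours makes visible the useful structural fact that $g$ is simply the $\st$-pushforward of $f|_{C^h}$.

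One small imprecision should be repaired: the parameter you use to define $B_\varepsilon$ cannot literally be the real number $g(c)$, since $\R$ is not assumed to be a subfield of $R$ (only the standard part map $\st:\mathcal{O}\to\R$ is given). Instead fix some $r\in\mathcal{O}$ with $\st(r)=g(c)$ and use $r$ as the parameter; likewise take $\varepsilon\in\Q^{>0}$ (harmless, since rational $\varepsilon$ suffice for continuity) so that $\varepsilon/2$ is a legitimate element of $R$. With these adjustments everything you wrote goes through: $B_\varepsilon$ is disjoint from $\st^{-1}(c)$ because $|f(a)-r|$ is infinitesimal for $a\in\st^{-1}(c)$, hence $c\notin\st(B_\varepsilon)$, and the closedness of $\st(B_\varepsilon)$ gives the desired neighborhood.
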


\begin{proof}
Let $x \in C$ and suppose towards a contradiction that
$\epsilon \in \mathbb{Q}^{>0}$ is such that for every
$\lambda \in \mathbb{Q}^{>0}$ we have $x_{\lambda} \in C$ with
$|x_{\lambda} - x|< \lambda$ and $|g(x_{\lambda}) - g(x)| > \epsilon$.
Pick $y \in \{x\}^{h}$ and for $\lambda \in \mathbb{Q}^{>0}$ pick
$y_{\lambda} \in \{ x_{\lambda} \}^{h}$. Then
$|f(y) - f(y_\lambda )| \ge \epsilon$ for those $\lambda$, so by
saturation we get a point
 $z \in  \{x\}^{h}$ with $|f(y) - f(z )| \ge \epsilon$,
contradicting that $g$ is a function.
\end{proof}

\noindent
For $C\subseteq \mathbb{R}^n$ we let $G(C)$ be the set of all $g:
C \to \mathbb{R}$ that are induced by some definable $f: X \to R$ with
$X\subseteq R^n$.

\begin{lemma}\label{stan4}
Let $1\le j(1) < \dots < j(m)\le n$ and define
$$\pi: \mathbb{R}^n \to \mathbb{R}^m, \quad  \pi(x_1,\dots,x_n)=(x_{j(1)},
\dots,x_{j(m)}).$$ Let $C\subseteq \mathbb{R}^n$ and suppose
$g\in G(\pi C)$. Then
$g\circ \pi|_{C} \in G(C)$.
\end{lemma}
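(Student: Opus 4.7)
The plan is to construct an inducing definable function on $R^n$ by pulling $f$ back along the analogous coordinate projection $P : R^n \to R^m$ in $R$. Concretely, fix a definable $f : X \to R$ with $X \subseteq R^m$ that induces $g$, so that $(\pi C)^h \subseteq X$, $f|(\pi C)^h$ is continuous, $f((\pi C)^h) \subseteq \mathcal{O}$, and $\Gamma g = \st(\Gamma f) \cap (\pi C \times \mathbb{R})$. Let $P : R^n \to R^m$ be given by $P(x_1,\dots,x_n) = (x_{j(1)},\dots,x_{j(m)})$, put $Y := P^{-1}(X) \subseteq R^n$ (which is definable), and set $F := f \circ P|_Y$. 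My claim is that $F$ induces $g \circ \pi|_C$.

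The whole argument hinges on the compatibility $\st \circ P = \pi \circ \st$ on $\mathcal{O}^n$. First I would use this to verify $P(C^h) \subseteq (\pi C)^h$: for $x \in C^h \subseteq \mathcal{O}^n$ one has $\st(P(x)) = \pi(\st(x)) \in \pi C$ and $P(x) \in \mathcal{O}^m$. From this inclusion, three of the four conditions in the definition of ``$F$ induces $g \circ \pi|_C$'' are immediate: $C^h \subseteq Y$, continuity of $F|C^h$ (as a composition of continuous maps, since $f|(\pi C)^h$ is continuous), and $F(C^h) \subseteq f((\pi C)^h) \subseteq \mathcal{O}$.

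The remaining task is the graph identity $\Gamma(g \circ \pi|_C) = \st(\Gamma F) \cap (C \times \mathbb{R})$. For the inclusion $\subseteq$, given $c \in C$, pick any $y \in \{c\}^h$; then $y \in C^h \subseteq Y$, $P(y) \in (\pi C)^h$, and $\st(f(P(y))) = g(\st(P(y))) = g(\pi c)$ because $f$ induces $g$ at the point $\pi c \in \pi C$. Hence $(c, g(\pi c)) = \st(y, F(y)) \in \st(\Gamma F) \cap (C \times \mathbb{R})$. For the inclusion $\supseteq$, suppose $(c,r) \in \st(\Gamma F) \cap (C \times \mathbb{R})$; unwind to get $y \in Y \cap \mathcal{O}^n$ with $F(y) \in \mathcal{O}$, $\st(y)=c$, and $\st(F(y)) = r$. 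Then $P(y) \in X \cap \mathcal{O}^m$, $(P(y), f(P(y))) \in \Gamma f$ with standard part $(\pi c, r)$, and $\pi c \in \pi C$, so $(\pi c, r) \in \st(\Gamma f) \cap (\pi C \times \mathbb{R}) = \Gamma g$; thus $r = g(\pi c)$, as required.

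I do not anticipate any real obstacle: the argument is essentially bookkeeping. The only place where care is needed is to keep straight the distinction between $(\pi C)^h$ and $\pi(C^h)$, and to recognize that the proof really uses only the easy inclusion $P(C^h) \subseteq (\pi C)^h$ together with the defining properties of $f$.
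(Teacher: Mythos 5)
Your proposal is correct and follows essentially the same route as the paper: pull $f$ back along the coordinate projection $p:R^n\to R^m$, restrict to $p^{-1}(\text{dom}\,f)$, and check the four clauses of ``induces.'' The paper dismisses the graph identity as ``easy to check,'' and your verification supplies exactly that bookkeeping.
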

\begin{proof} Take definable $f: Y \to R$ with $Y\subseteq R^m$
such that $f$ induces $g$, so $\Gamma g = \st(\Gamma f) \cap
\big(\pi C\times \mathbb{R}\big)$.  Let $p: R^n \to R^m$ be given by
$$p(x_1,\dots,x_n)=(x_{j(1)},\dots,x_{j(m)}),$$ and put
$X:= p^{-1}(Y)$. Then $C^h \subseteq X$, and it is easy to check
that
$$\Gamma (g\circ \pi|_{C})=\st\big(\Gamma(f \circ p|_{X})\big)\cap (C\times
\R),$$ so $g\circ \pi|_{C}$ is induced by $f\circ p|_{X}$.
\end{proof}


\begin{definition}\label{defgood} Let $i=(i_1,\dots,i_n)$ be a sequence of zeros and ones. Good $i$-cells are subsets of $\R^n$ obtained by recursion on
$n$ as follows:
\begin{enumerate}
\item[{\em (i)}] For $n=0$ and $i$ the empty sequence,
the set $\R^0$ is the only good $i$-cell, and
for $n=1$, a {\em good $(0)$-cell\/} is a singleton
$\{a\}$ with $a\in \R$;
a {\em good $(1)$-cell\/} is an interval in $\R$.

\item[{\em (ii)}] Let $n>0$ and assume inductively that good $i$-cells are
subsets of $\R^n$. A good $(i,0)$-cell is a set $\Gamma h \subseteq
\mathbb{R}^{n+1}$ where $h \in G(C)$ and $C \subseteq
\mathbb{R}^n$ is a good $i$-cell. A good $(i,1)$-cell is either a
set $C\times \R$, or a set
$(-\infty, f)\subseteq \R^{n+1}$, or a set $(g,h) \subseteq
\mathbb{R}^{n+1}$, or a set $(f, +\infty) \subseteq
\mathbb{R}^{n+1}$, where $f,g,h \in G(C)$, $g<h$, and $C$ is a good
$i$-cell.
\end{enumerate}
\end{definition}

\noindent
One verifies easily that
a good $i$-cell is open in $\mathbb{R}^n$
iff $i_1=\dots =i_n=1$, and that if $i_1=\dots=i_n=1$, then every
good $i$-cell is homeomorphic to $\mathbb{R}^n$.
A {\em good cell in
$\mathbb{R}^n$} is a good $i$-cell for some sequence
$i=(i_1,\dots,i_n)$ of zeros and ones.

\begin{lemma}\label{gooddifference}
Every good cell in $\R^n$ is of the form
$X \setminus Y$
with $X,Y\in \operatorname{St}_n$.
\end{lemma}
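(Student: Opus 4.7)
The plan is induction on $n$, with the slightly strengthened inductive hypothesis that every good $i$-cell $C \subseteq \R^n$ can be written $C = X_C \setminus Y_C$ with $X_C, Y_C \in \operatorname{St}_n$ and $Y_C \subseteq X_C$; the containment can always be arranged by replacing $Y_C$ with $X_C \cap Y_C$, still in $\operatorname{St}_n$ by Lemma~\ref{stan1}. The base cases are immediate: $\R^0 = \R^0 \setminus \emptyset$; a singleton $\{a\} \subseteq \R$ equals $\st(\{a\})$ because $a \in \R \subseteq \mathcal O$; and an interval $(a,b) \subseteq \R$ (with possibly infinite endpoints) equals $\st(I) \setminus F$, where $I \subseteq R$ is the interval with the same endpoints viewed in $R$ and $F \subseteq \{a,b\}$ lists the finite endpoints, using that $\st(I) = [a,b]$.

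For the inductive step, fix a good $i$-cell $C \subseteq \R^n$ with $C = X_C \setminus Y_C$ and $Y_C \subseteq X_C$, and consider a good $(i,k)$-cell $D \subseteq \R^{n+1}$ built over $C$. In every case the strategy is the same: choose a definable $X'' \subseteq R^{n+1}$ whose standard part $A := \st(X'')$ has, for $x \in C$, a fiber matching the \emph{closure} of the desired cell over $x$; then subtract off the fibers over $Y_C$ together with any extraneous boundary. For a good $(i,0)$-cell $D = \Gamma h$ with $h \in G(C)$ induced by a definable $\tilde f : X' \to R$, take $A := \st(\Gamma \tilde f)$. Since $\{x\}^h \subseteq C^h \subseteq X'$ and $\st(\tilde f(a)) = h(x)$ for every $a \in \{x\}^h$ (by the induced-function property), $A_x = \{h(x)\}$ for $x \in C$. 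Hence
$$\Gamma h \;=\; \bigl(A \cap (X_C \times \R)\bigr) \setminus \bigl(A \cap (Y_C \times \R)\bigr),$$
with both sides in $\operatorname{St}_{n+1}$ by Lemma~\ref{stan1}: the fibers over $Y_C$ agree on the two sides and cancel, while the fibers outside $X_C$ are empty. The strip $D = C \times \R$ is likewise immediate: $D = (X_C \times \R) \setminus (Y_C \times \R)$.

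For $D = (-\infty, f)$ with $f \in G(C)$ induced by $\tilde f : X' \to R$, take $X'' := (-\infty, \tilde f) \subseteq R^{n+1}$. The key computation — and the only genuinely delicate part of the argument — is that $\st(X'')_x = (-\infty, f(x)]$ for $x \in C$: any $(a,b) \in X'' \cap \mathcal O^{n+1}$ with $\st(a) = x$ has $a \in C^h$, so $\st(\tilde f(a)) = f(x)$ and hence $\st(b) \le f(x)$; conversely, for $y < f(x)$ take $b := y \in \mathcal O$, and for $y = f(x)$ take $b := \tilde f(a) - \varepsilon$ with $\varepsilon > 0$ infinitesimal. Setting $A := \st(X'')$ and $G := \st(\Gamma \tilde f)$, one then writes
$$(-\infty, f) \;=\; \bigl(A \cap (X_C \times \R)\bigr) \setminus \Bigl(\bigl(A \cap (Y_C \times \R)\bigr) \cup \bigl(G \cap (X_C \times \R)\bigr)\Bigr),$$
which over $C$ peels $\{f(x)\}$ off $(-\infty, f(x)]$ and over $Y_C$ is completely absorbed. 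The cases $(f,+\infty)$ and $(g,h)$ go through identically, using $X'' := (\tilde f, +\infty)$ and $X'' := (\tilde g, \tilde h)$ respectively (the latter legal because $g < h$ on $C$ forces $\tilde g < \tilde h$ on $C^h$ via standard parts), and subtracting off the graph(s) of the bounding function(s) intersected with $X_C \times \R$ in addition to the $Y_C$-fiber. Everything else is fiberwise bookkeeping inside $\operatorname{St}_{n+1}$, closed under intersection by Lemma~\ref{stan1}.
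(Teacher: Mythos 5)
Your proof is correct and follows the paper's inductive strategy: write the base cell as $\st(P) \setminus \st(Q)$ and exploit the identity $\Gamma h = \st(\Gamma \tilde f) \cap (C \times \R)$ built into the definition of ``induces''. The only variation is that for the interval-type cells you subtract the graph(s) of the bounding function(s) from the fiberwise closure $\st\big((-\infty, \tilde f)\big)$, whereas the paper subtracts $\st\big((-\infty,\phi] \cup [\psi,+\infty)\big)$ directly from $C\times\R$; both work, though in your $(g,h)$ case you should first restrict $\tilde g, \tilde h$ to the common definable domain $\{x : \tilde g(x) < \tilde h(x)\} \supseteq C^h$ before forming $(\tilde g, \tilde h)$, a small bookkeeping step the paper's union-of-closed-sets formulation avoids.
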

\begin{proof} This is clear for $n=1$. Suppose it holds for a certain
$n\ge 1$, and
consider first an $(i,0)$-cell $\Gamma h\subseteq \R^{n+1}$ as
in (ii) above, with $h\in G(C)$ induced by $f: X \to R$, where
$X\subseteq R^n$.
Then
$\Gamma h=\st( \Gamma f) \cap (C\times \R)$.
 Now $C=\st(P) \setminus \st(Q)$ with definable $P,Q\subseteq R^n$, so
 $C\times \R= \st(P\times R) \setminus \st(Q\times R)$, hence
$$\Gamma h = \big(\st(\Gamma f) \cap \st(P\times R)\big)\setminus \big(\st(\Gamma f) \cap \st(Q\times R)\big),$$
and we are done by Lemma~\ref{stan1}.  Next, consider an
$(i,1)$-cell $(g,h)\subseteq \R^{n+1}$
with $g, h\in G(C)$, $g<h$, with $g$ induced by  $\phi: X \to R$ and
$h$ induced by $\psi: Y \to R$ with $X,Y\subseteq R^n$. Then
$\Gamma g=\st( \Gamma \phi) \cap (C\times \R)$ and
$\Gamma h = \st(\Gamma \psi)\cap (C\times \R)$.
It is easy to check that
\begin{align*}
(-\infty, g]&= \st((-\infty, \phi]) \cap (C\times \R),\\
[h, +\infty)&=\st([\psi,+\infty)) \cap (C\times \R), \text{ hence}\\
(g,h)&=(C\times \R) \setminus \st\big((-\infty, \phi] \cup [\psi, +\infty)\big).
\end{align*}
Now $C=\st(P) \setminus \st(Q)$ with definable $P,Q\subseteq R^n$, so
$$(g, h)= \st(P\times R) \setminus \st\big( (Q\times R) \cup (-\infty, \phi] \cup [\psi,+\infty)\big),$$
and we are done. The other types of $(i,1)$-cells are
treated likewise.
\end{proof}

\begin{lemma}\label{inverse}
Let $C\subseteq \R^n$ be a good $(i_1,\dots,i_n)$-cell, and suppose $i_k=0$
where $k\in \{1,\dots,n\}$. Let $\pi: \R^n \to \R^{n-1}$ be given by
$$ \pi(x_1,\dots,x_n)=(x_1,\dots,x_{k-1}, x_{k+1}, \dots,x_n).$$
Then $\pi(C)\subseteq \R^{n-1}$ is a good cell,
$\pi|C: C \to \pi(C)$ is a homeomorphism, and if
$E\subseteq \pi(C)$ is a good cell, so is its inverse image
$\pi^{-1}(E)\cap C$.
\end{lemma}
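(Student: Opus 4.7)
The plan is to prove all three assertions simultaneously by induction on $n$. The base case $n=1$ is immediate: the only possibility is $C=\{a\}$, and every claim is trivial. For the inductive step, write $C$ as built from a good $(i_1,\dots,i_{n-1})$-cell $C^*\subseteq\R^{n-1}$ via clause~(ii) of Definition~\ref{defgood}. If $k=n$ then $i_n=0$, so $C=\Gamma h$ with $h\in G(C^*)$ and $\pi=\pi^n_{n-1}$; thus $\pi(C)=C^*$, the map $y\mapsto(y,h(y))$ is a continuous inverse of $\pi|_C$, and for any good cell $E\subseteq C^*$ the restriction $h|_E$ still lies in $G(E)$ (the function inducing $h$ also induces $h|_E$, since $E^h\subseteq(C^*)^h$), so $\pi^{-1}(E)\cap C=\Gamma(h|_E)$ is a good cell.

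Suppose now $k<n$. Apply the inductive hypothesis to $C^*$ and the projection $\pi':\R^{n-1}\to\R^{n-2}$ dropping coordinate $k$: this yields a good cell $D:=\pi'(C^*)$, a homeomorphism $\pi'|_{C^*}:C^*\to D$ with inverse $\sigma$, and the inverse-image statement for $C^*$. The crucial structural observation is that because $i_k=0$, the construction of $C^*$ includes at step $k$ the formation of a graph $\Gamma\phi_k$ over some good cell $C_0\subseteq\R^{k-1}$ with $\phi_k\in G(C_0)$, while the subsequent steps preserve the first $k$ coordinates. Consequently $\pi^{n-1}_{k-1}(C^*)=C_0$ and
\[ \sigma(y)=(y_1,\dots,y_{k-1},\phi_k(y_1,\dots,y_{k-1}),y_k,\dots,y_{n-2}) \]
for $y\in D$.

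The main technical step is to show that for every $h\in G(C^*)$ one has $h\circ\sigma\in G(D)$. Let $h$ be induced by a definable $f:X\to R$ and $\phi_k$ by a definable $f_k:X_k\to R$. The plan is to define the $R$-definable function
\[ F(y):=f(y_1,\dots,y_{k-1},f_k(y_1,\dots,y_{k-1}),y_k,\dots,y_{n-2}) \]
on its natural definable domain and to verify that $F$ induces $h\circ\sigma$. Using $\pi^{n-1}_{k-1}(C^*)=C_0$, one checks that $D^h$ is contained in the domain of $F$: for $y\in D^h$ one has $(y_1,\dots,y_{k-1})\in C_0^h\subseteq X_k$, and the inserted point in $R^{n-1}$ has standard part $\sigma(\st(y))\in C^*$, so it lies in $(C^*)^h\subseteq X$. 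Continuity of $f_k$ on $C_0^h$ and of $f$ on $(C^*)^h$ yields continuity of $F|_{D^h}$, and both $F(D^h)\subseteq\mathcal{O}$ and $\Gamma(h\circ\sigma)=\st(\Gamma F)\cap(D\times\R)$ then follow by direct standard-part computation.

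With this in place the remaining subcases are routine. If $i_n=0$ then $C=\Gamma h$ gives $\pi(C)=\Gamma(h\circ\sigma)$, a good cell; any good cell $E\subseteq\pi(C)$ is forced to have the form $\Gamma((h\circ\sigma)|_{E^*})$ for a good cell $E^*\subseteq D$ (a good cell contained in a graph must itself be a graph), and then $\pi^{-1}(E)\cap C=\Gamma(h|_{\sigma(E^*)})$, where $\sigma(E^*)=\pi'^{-1}(E^*)\cap C^*$ is a good cell by the inductive hypothesis. If $i_n=1$, then $C$ is $C^*\times\R$ or one of the interval cells over $C^*$ with endpoint functions in $G(C^*)$; the main technical step transports those endpoint functions to members of $G(D)$, so $\pi(C)$ acquires the analogous shape over $D$, and for the inverse-image part one reduces to the base $E^*\subseteq D$, uses the inductive hypothesis to see that $\sigma(E^*)$ is a good cell, and invokes Lemma~\ref{stan4} to pull endpoint functions of $E$ back from $E^*$ to $\sigma(E^*)$. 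Injectivity of $\pi|_C$ is automatic in every subcase because the $k$-th coordinate in $C$ is determined by the others via $\phi_k$. The hard part will be the main technical step: set-theoretically $h\circ\sigma$ involves $\sigma$, which is not $R$-definable, and Lemma~\ref{stan4} runs in the opposite direction; the resolution is to exploit the $R$-definable lift $f_k$ of $\phi_k$ (supplied by $i_k=0$) to construct $F$ directly inside $R$.
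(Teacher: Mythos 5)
Your proposal is correct and takes essentially the same approach as the paper's proof: both induct on $n$, separate the case where $k$ is the last index, and in the case $k<n$ exploit the $R$-definable function $f_k$ lifting $\phi_k$ to construct an $R$-definable $F$ that induces $h\circ\sigma$, then use Lemma~\ref{stan4} and the inductive inverse-image statement to handle $\pi^{-1}(E)\cap C$. The only differences are notational (the paper phrases the step as going from $n$ to $n+1$, and calls your $C^*$, $D$, $\phi_k$, $f_k$ respectively $D$, $\pi_0(D)$, $h$, $\eta$).
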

\begin{proof} By induction on $n$. If $n=1$, then $k=1$ and $C$
is a singleton, and the lemma holds trivially in that case.
Assume inductively that the lemma holds for a certain $n>0$,
let $C\subseteq \R^{n+1}$
be a good $(i_1,\dots,i_{n+1})$-cell, let
$k\in \{1,\dots,n+1\}$ be such that $i_k=0$, and let
$\pi: \R^{n+1} \to \R^{n}$
be given by
$$ \pi(x_1,\dots,x_{n+1})=(x_1,\dots,x_{k-1}, x_{k+1}, \dots,x_{n+1}).$$
Our task is to establish the following.

\medskip\noindent
{\em Claim}.\  $\pi(C)\subseteq \R^{n}$ is a good cell,
$\pi|C:\ C \to \pi(C)$ is a homeomorphism, and if
$E\subseteq \pi(C)$ is a good cell, then
$\pi^{-1}(E)\cap C$ is a good cell in $\R^{n+1}$.

\medskip\noindent
If $k=n+1$, then $\pi=\pi_n^{n+1}$ and $C=\Gamma f$ with $f\in G(\pi(C))$, and
then the claim follows easily.
So we can assume $k\le n$. Then we introduce the good cell
$D:= \pi_n^{n+1}(C)$ in $\R^n$ and the map $\pi_0: \R^n \to \R^{n-1}$
defined by
$$ \pi_0(x_1,\dots,x_n)=(x_1,\dots,x_{k-1}, x_{k+1}, \dots,x_n).$$
Then $\pi_0(D)\subseteq \R^{n-1}$ is a good cell,
$\pi_0|_D: D \to \pi_0(D)$ is a homeomorphism, and for each good cell
$F\subseteq \pi_0(D)$ its inverse image
$\pi_0^{-1}(F)\cap D$ is a good cell in $\R^n$.
Since $\pi(x,t)=(\pi_0(x),t)$ for $x\in \R^n$ and $t\in \R$, it follows that
$\pi|_{D\times \R}: D\times \R \to \pi_0(D)\times \R$ is a homeomorphism, so
$\pi|C:\ C \to \pi(C)$ is a homeomorphism. We have
$\pi_k^n D=\Gamma h$ where $h\in G\big(\pi_{k-1}^n(D)\big)$, and the
map $(\pi_0|_D)^{-1}: \pi_0(D) \to D$ is given by
$$(x_1,\dots,x_{k-1},x_{k+1},\dots,x_n) \mapsto
(x_1,\dots,x_{k-1},h(x_1,\dots,x_{k-1}), x_{k+1},\dots,x_n).$$
Let $h$ be induced by $\eta: Y \to R$, $Y\subseteq R^{k-1}$.

Consider first the case that $C=\Gamma f$ with $f\in G(D)$.
It is routine to check that then $\pi(C)=\Gamma{f_0}$, where
$f_0: =f\circ (\pi_0|_D)^{-1}\ :\ \pi_0(D) \to \R$ is given by
$$(x_1,\dots,x_{k-1},x_{k+1},\dots,x_n)\mapsto
f(x_1,\dots,x_{k-1},h(x_1,\dots,x_{k-1}), x_{k+1},\dots,x_n).
$$
Let $f$ be induced by $\phi: X \to R$, $X\subseteq R^n$, and let
$Z$ be the set of all $(x_1,\dots,x_{k-1},x_{k+1},\dots,x_n)\in R^{n-1}$ such
that
$$(x_1,\dots,x_{k-1})\in Y,\ (x_1,\dots,x_{k-1},\eta(x_1,\dots,x_{k-1}),x_{k+1},\dots,x_n)\in X.$$
One easily shows that then $f_0$ is induced by the function $Z \to R$ given by
$$(x_1,\dots,x_{k-1},x_{k+1},\dots,x_n) \mapsto
\phi(x_1,\dots,x_{k-1},\eta(x_1,\dots,x_{k-1}),x_{k+1},\dots,x_n).$$
Thus $\pi(C)=\Gamma{f_0}$ is a good cell in $\R^n$. Let $E\subseteq \pi(C)$
be a good cell. Then $E=\Gamma(f_0|_F)$ where $F\subseteq \pi_0(D)$
is a good cell, so $B:= \pi_0^{-1}(F)\cap D$ is a good cell in $\R^n$ by the
inductive assumption. Then $\pi^{-1}(E)\cap C = \Gamma(f|_B)$, as is easy to
check, so $\pi^{-1}(E)\cap C$ is indeed a good cell.

Next, consider the case $C=(f,g)$ where $f, g\in G(D)$, $f < g$.
Then $\pi(C)=(f_0, g_0)$, where
\begin{align*}f_0:&=f\circ (\pi_0|_D)^{-1}\ :\ \pi_0(D) \to \R,\\
g_0:&=g\circ (\pi_0|_D)^{-1}\ :\ \pi_0(D) \to \R,
\end{align*} and as before one
checks that $f_0, g_0\in G(\pi_0(D))$, so $\pi(C)$ is a good cell.
Let $E\subseteq \pi(C)$ be a good cell, and set $F:= \pi^n_{n-1}(E)$. Then
$F\subseteq  \pi_0(D)$ is a good cell, so $B:= \pi_0^{-1}(F)\cap D$ is a
good cell in $\R^n$ by the inductive assumption. If
$E=\Gamma s$ with $s\in G(F)$, then
$$\pi^{-1}(E)\cap C = \Gamma(s\circ \pi_0|_B),$$ as is easy to
check, and  $(s\circ \pi_0|_B) \in G(B)$ by Lemma~\ref{stan4}, so
$\pi^{-1}(E)\cap C$ is indeed a good cell. If $E=(s,t)$ with
$s,t\in G(F)$, $s<t$, then
$$\pi^{-1}(E)\cap C = (s\circ \pi_0|_B,t\circ \pi_0|_B),$$ and
$(s\circ \pi_0|_B), (t\circ \pi_0|_B) \in G(B)$ by Lemma~\ref{stan4}, so
$\pi^{-1}(E)\cap C$ is indeed a good cell.

The remaining cases, where $C=D\times \R$, or $C=(-\infty,f)$, or
$C=(f,+\infty)$, with
$f\in G(D)$, are treated in the same way.
\end{proof}

\section{Good cell decomposition}

\noindent A set $X\subseteq R^n$ is said to be {\em strongly
bounded\/} if there is $q\in \Q^{>0}$ such that $|x|\le q$ for all $x\in X$.
The proof of good cell
decomposition in this section works initially only for strongly bounded
definable sets, because it uses part (1) of Lemma~\ref{stan0}.
Once we have good cell decomposition for that case we
extend it to general definable sets using the homeomorphism
$x \mapsto x/\sqrt{1+x^2}:\ \R\ \to\ (-1,1)$.

Berarducci and Otero~\cite{bo} define
a real-valued finitely additive measure $\mu= \mu^{(n)}$ on the
lattice of strongly bounded definable subsets of $R^n$. The
properties of this measure imply a fact that is useful for the
inductive step in the proof of good cell decomposition:

\begin{lemma}\label{Qdensitylemma}
Syppose $Y \subseteq R^n$ is definable and $\st{Y}$ has nonempty interior in
$\R^n$. Then $Y$ contains a $\mathbb{Q}$-box.
\end{lemma}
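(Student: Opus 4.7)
The plan is induction on $n$. The base case $n=1$ is immediate from o-minimality: $Y\subseteq R$ is a finite union of intervals and points, and $\st(Y)$ having nonempty interior forces some interval component $(a,b)\subseteq Y$ to satisfy $\st(a)<\st(b)$; any rationals $\st(a)<p<q<\st(b)$ then give a $\Q$-interval $(p,q)\subseteq (a,b)\subseteq Y$.

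For the inductive step with $n\ge 2$, I would first replace $Y$ by $Y\cap[-N,N]^n$ for $N\in\Q^{>0}$ large enough that the intersection's standard part still has nonempty interior, so that I may assume $Y$ is strongly bounded. Two properties of the Berarducci--Otero measure $\mu=\mu^{(n)}$ on the lattice of strongly bounded definable sets are the main tool: a positivity criterion, namely that $\mu^{(n)}(Y)>0$ whenever $\st(Y)$ has nonempty interior, and a Fubini-type identity $\mu^{(n)}(Y)=\int \mu^{(1)}(Y_x)\, d\mu^{(n-1)}(x)$ writing $\mu^{(n)}$ as an integral of one-dimensional fiber measures. Together these force $\mu^{(n-1)}(B)>0$ for $B:=\{x\in R^{n-1}:\mu^{(1)}(Y_x)>0\}$ (a definable, strongly bounded subset of $R^{n-1}$), and hence $\st(B)\subseteq\R^{n-1}$ has nonempty interior.

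By the $n=1$ case applied to each fiber, $x\in B$ exactly when $Y_x$ contains some $\Q$-interval, so $B=\bigcup_{c<d\text{ in }\Q}B_{c,d}$, where $B_{c,d}:=\{x\in R^{n-1}:(c,d)\subseteq Y_x\}$ is definable. Each $\st(B_{c,d})$ is closed in $\R^{n-1}$ by the basic facts on standard part sets, and $\st(B)=\bigcup_{c,d}\st(B_{c,d})$ is a countable union. Since $\st(B)$ has nonempty interior and $\R^{n-1}$ is a Baire space, at least one $\st(B_{c,d})$ must itself have nonempty interior. The inductive hypothesis applied to this $B_{c,d}$ produces a $\Q$-box $B'\subseteq R^{n-1}$ inside $B_{c,d}$, and then $B'\times(c,d)\subseteq Y$ is the desired $\Q$-box in $R^n$.

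The main obstacle lies in invoking the correct properties of the Berarducci--Otero measure, especially the Fubini-type identity and the positivity criterion, from their construction; the rest of the argument is a routine combination of the countable-union-plus-Baire step with the inductive hypothesis.
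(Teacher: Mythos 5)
Your plan is a genuinely different route from the paper's. The paper, having reduced to the strongly bounded case, simply cites Theorem 4.3 of Berarducci--Otero to get $\mu(Y)=\lambda(\st Y)>0$ and then appeals directly to the construction of $\mu$ (essentially a lower Riemann sum over $\Q$-grids) to extract a $\Q$-box. You instead try to rebuild this from weaker input via induction, Fubini, and Baire category. That is an interesting idea, but as written it has two gaps beyond the Fubini identity you already flag as unverified.

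The first gap is the step ``$\mu^{(n-1)}(B)>0$, and hence $\st(B)$ has nonempty interior.'' This is the \emph{converse} of the positivity criterion you quoted, and it does not follow from it. What Theorem 4.3 of \cite{bo} actually gives is $\mu^{(n-1)}(B)=\lambda(\st B)$, so you only get that $\st B$ has \emph{positive Lebesgue measure}. A closed subset of $\R^{n-1}$ of positive measure need not have nonempty interior (fat Cantor sets), and at this point in the paper one does \emph{not} yet know that $\st B$ is a finite union of cells --- that tameness is exactly what the whole Section~4 machinery (for which Lemma~\ref{Qdensitylemma} is an input) is being used to establish, so appealing to it here would be circular. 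The fix is to strengthen the induction hypothesis from ``$\st(\cdot)$ has nonempty interior'' to ``$\lambda(\st(\cdot))>0$'': the $n=1$ base case still works (by Lemma~\ref{stan2}, $\st Y$ is a finite union of intervals and points, and positive measure forces an interval), and the Baire step can be replaced by countable subadditivity of $\lambda$ applied to $\st B=\bigcup_{c<d}\st B_{c,d}$. Since nonempty interior implies positive measure, the stated lemma follows.

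The second gap is the parenthetical claim that $B:=\{x\in R^{n-1}:\mu^{(1)}(Y_x)>0\}$ is definable. After uniform cell decomposition of the family $\{Y_x\}$, one has $\mu^{(1)}(Y_x)>0$ iff the longest interval of $Y_x$ has non-infinitesimal length, i.e.\ $B=\bigcup_{m\ge 1}B_m$ with $B_m:=\{x: Y_x \text{ contains an interval of length}>1/m\}$. Each $B_m$ is definable, but a countable increasing union of definable sets need not be definable in a saturated $R$; you would need to argue, say via monotone convergence in your Fubini identity, that already $\mu^{(n-1)}(B_m)>0$ for some fixed $m$, and then run the rest of the argument with $B_m$. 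This is plausible but must be said.

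Overall: the approach is repairable, but the positivity-criterion direction is used backwards, and the definability of $B$ is asserted rather than established. Both issues are avoided entirely by the paper's direct appeal to the grid construction of $\mu$, which is also substantially shorter.
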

\begin{proof} We can assume $Y$ is strongly bounded. Then by Theorem 4.3 of
\cite{bo} we have $\mu (Y) = \lambda (\st{Y} )$ where $\lambda$ is the
usual Lebesgue measure on $\R^n$; in particular, $\mu(Y)>0$. The way
$\mu$ is defined in \cite{bo} guarantees that $Y$ contains a
$\mathbb{Q}$-box.
\end{proof}

\begin{lemma}\label{proj}
Let $C \subseteq \mathbb{R}^{n}$ be a good $i$-cell, let $X
\subseteq R^{n+1}$ be definable and suppose $k\in \{1,\dots,n\}$ is
such that $i_k=0$. Define $\pi : \mathbb{R}^{n+1} \rightarrow
\mathbb{R}^{n}$ by
$$\pi(x)= (x_1 , \dots , x_{k-1} , x_{k+1},
\dots ,x_{n+1} ).$$
Then $\pi \big(\st(X) \cap (C \times \mathbb{R})\big)$
is a difference of sets in $\operatorname{St}_n$.
\end{lemma}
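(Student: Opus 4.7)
The plan is to use Lemma~\ref{inverse} to reduce the projection problem to computing $\st$ of a single definable subset of $R^n$, with an infinitesimal ``thickening'' parameter chosen uniformly by saturation.

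Since $C \times \R \subseteq \R^{n+1}$ is a good cell with index sequence $(i_1, \ldots, i_n, 1)$ still having $i_k = 0$, Lemma~\ref{inverse} provides a homeomorphism $\pi|_{C \times \R}: C \times \R \to D \times \R$, where $D := \pi_0(C) \subseteq \R^{n-1}$ is a good cell. Unpacking the construction of a good cell at the pinned coordinate $k$, there is $h \in G(\pi^n_{k-1}(C))$ whose inverse homeomorphism is
$$\tau(a_1, \dots, a_{n-1}, t) = (a_1, \dots, a_{k-1}, h(a_1, \dots, a_{k-1}), a_k, \dots, a_{n-1}, t).$$
Fix a definable $\eta: Y \to R$ with $Y \subseteq R^{k-1}$ inducing $h$, so $\pi^n_{k-1}(C)^h \subseteq Y$ and $\eta(\pi^n_{k-1}(C)^h) \subseteq \mathcal{O}$. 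Since $\pi|_{C \times \R}$ is a homeomorphism,
$$\pi(\st(X) \cap (C \times \R)) = \{(a, t) \in D \times \R : \tau(a, t) \in \st(X)\}.$$

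For each $\epsilon \in R^{>0}$, introduce the definable set
$$\hat X_\epsilon := \{b \in Y \times R^{n-k+1} : \exists w \in R,\ |w - \eta(b_1, \dots, b_{k-1})| < \epsilon \text{ and } (b_1, \dots, b_{k-1}, w, b_k, \dots, b_n) \in X\}.$$
For any positive infinitesimal $\epsilon$, a witness $b \in \hat X_\epsilon \cap \mathcal{O}^n$ with $\st(b) = (a, t) \in D \times \R$ forces $(b_1,\dots,b_{k-1}) \in \pi^n_{k-1}(C)^h$, hence $\eta(b_1,\dots,b_{k-1}) \in \mathcal{O}$ with standard part $h(a_1,\dots,a_{k-1})$, so the associated $w$ lies in $\mathcal{O}$ with the same standard part; thus $(b_1,\dots,b_{k-1},w,b_k,\dots,b_n) \in X \cap \mathcal{O}^{n+1}$ has standard part $\tau(a,t)$, giving $\st(\hat X_\epsilon) \cap (D \times \R) \subseteq \pi(\st(X) \cap (C \times \R))$. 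Conversely, each $(a, t) \in \pi(\st(X) \cap (C \times \R))$ has a witness $z \in X \cap \mathcal{O}^{n+1}$ with $\st(z) = \tau(a, t)$, and $\delta_{(a, t)} := |z_k - \eta(z_1, \dots, z_{k-1})|$ is infinitesimal; whenever $\epsilon > \delta_{(a, t)}$, deleting $z_k$ from $z$ produces a witness for $(a, t) \in \st(\hat X_\epsilon)$.

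The main point is to find a single infinitesimal $\epsilon_0$ dominating every $\delta_{(a, t)}$ at once. The $1$-type in $\epsilon_0$
$$\{\delta_{(a, t)} < \epsilon_0 : (a, t) \in \pi(\st(X) \cap (C \times \R))\} \cup \{\epsilon_0 < 1/n : n \in \N^{>0}\}$$
has at most $2^{\aleph_0}$ formulas and is finitely satisfiable since the maximum of finitely many infinitesimals is infinitesimal; by $(2^{\aleph_0})^+$-saturation of $R$ it is realized by some $\epsilon_0$. For this $\epsilon_0$ we obtain $\pi(\st(X) \cap (C \times \R)) = \st(\hat X_{\epsilon_0}) \cap (D \times \R)$. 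Using Lemma~\ref{gooddifference} to write $D = \st(P) \setminus \st(Q)$ with $P,Q \subseteq R^{n-1}$ definable yields $D \times \R = \st(P \times R) \setminus \st(Q \times R)$, and hence
$$\pi(\st(X) \cap (C \times \R)) = \bigl[\st(\hat X_{\epsilon_0}) \cap \st(P \times R)\bigr] \setminus \bigl[\st(\hat X_{\epsilon_0}) \cap \st(Q \times R)\bigr];$$
by Lemma~\ref{stan1} both bracketed terms lie in $\operatorname{St}_n$, completing the proof. The only non-routine step is the uniform choice of $\epsilon_0$, which is precisely what the saturation hypothesis supplies.
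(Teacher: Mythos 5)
Your argument is correct and is essentially the paper's own: in both cases one replaces $X$ by an infinitesimally $\varepsilon$-thickened version along the $k$-th coordinate (your $\hat X_\varepsilon$ is exactly the paper's $p(X_\varepsilon)$, written with the projection built into the definition rather than applied afterwards), uses saturation over the at-most-continuum-many points of $\pi\big(\st(X)\cap(C\times\R)\big)$ to pick one uniform infinitesimal, and then finishes with Lemma~\ref{gooddifference} plus Lemma~\ref{stan1}. The only cosmetic difference is that the paper splits the identity into two claims (an equality before projecting, then commuting $\st$ with the projection) whereas you prove the projected identity in one step; the content and key lemmas invoked are the same.
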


\begin{proof}
Let $\pi_{k}^{n} C = \Gamma g$, with $g: \pi^{n}_{k-1}C \rightarrow
\mathbb{R}$ induced by $f : Y \rightarrow R$, $Y\subseteq R^{k-1}$.
For infinitesimal
$\varepsilon \in R^{>0}$, define $X_{\varepsilon}
\subseteq X$ as follows:
\[X_{\varepsilon}\ :=\ \{x \in X: \; (x_1 ,
\dots ,x_{k-1}) \in Y \mbox{ and }|f(x_1 , \dots ,x_{k-1}) -
x_{k}| \le \varepsilon \}
\]

\medskip\noindent
{\em Claim 1. \/} There is an infinitesimal $\varepsilon \in R^{>0}$
such that
$$\st(X) \cap (C \times \mathbb{R}) = \st(X_{\epsilon}) \cap (C
\times \mathbb{R}).$$

\medskip\noindent
To see this, pick for each $a \in \st(X) \cap (C \times \mathbb{R})$, an
$x \in \stinv{a}$. For such $x$,
$$\st (x_1 , \dots ,x_{k-1 }) \in \pi_{k-1}^{n}C\ \text{ and }\
|f(x_1 , \dots ,x_{k-1}) - x_{k}| \text{ is infinitesimal}.$$
Then saturation gives an infinitesimal $\varepsilon \in R^{>0}$ as
claimed.

\medskip\noindent Define $p: R^{n+1} \rightarrow R^{n}$ by
$p(x)=(x_1 \dots , x_{k-1}, x_{k+1} , \dots , x_{n+1})$, and take an
infinitesimal $\varepsilon\in R^{>0}$ with the property of Claim 1.

\medskip\noindent
{\em Claim 2.\/}\ $\pi \big(\st(X_{\varepsilon}) \cap (C \times
\mathbb{R})) = \st{p(X_{\varepsilon})} \cap \pi(C \times
\mathbb{R})$.

\medskip\noindent
It is clear that $\pi \big(\st(X_{\varepsilon}) \cap (C
\times \mathbb{R})\big) \subseteq \st{p(X_{\varepsilon})} \cap
\pi(C \times \mathbb{R})$. So take $x \in X_{\varepsilon}$ such
that
 $(\st{x_1} , \dots ,\st{x_{k-1}}, \st{x_{k+1}},
 \dots ,\st{x_{n+1}}) \in
\pi(C \times \R )$. We claim that then
$$\st x \in \st(X_{\varepsilon}) \cap (C
\times \mathbb{R}).$$
This follows from the definition of
$X_{\varepsilon}$: clearly $\st(x_1 , \dots x_{k-1}) \in
\pi_{k-1}^{n}C$ and $|x_{k} - f(x_1 , \dots ,x_{k-1})|$ is
infinitesimal. Hence $$\st{x_{k}} = \st{f (x_1 , \dots ,x_{k-1})}
= g (\st(x_1 , \dots ,x_{k-1})),$$ and so $\st(x_1 , \dots
,x_{k}) \in \Gamma{g}$.
\end{proof}

\noindent
We set $I:=[-1,1] \subseteq \mathbb{R}$ and $I(R):=\{x \in R: -1
\leq x \leq 1 \}$. A {\em good decomposition of $I^n$\/} is a
special kind of partition of $I^n$ into finitely many good cells.
The definition is by recursion on $n$:
\begin{enumerate}
\item[(i)] a good decomposition of $I^1 = I$ is a collection $$\{(c_0 ,
c_1 ),(c_2 , c_3 ), \dots ,(c_k , c_{k+1}), \{c_0 \}, \{c_1 \} ,
\dots ,\{c_k \},\{c_{k+1} \} \}$$ of intervals and points in $\R$
where $c_0 < c_1 < \dots < c_k < c_{k+1}$ are real numbers with
$c_0 = -1$ and $c_{k+1}=1$;
\item[(ii)] a good decomposition of
$I^{n+1}$ is a finite partition $\mathcal{D}$ of $I^{n+1}$ into good
cells such that $\{\pi_{n}^{n+1}C:\; C \in \mathcal{D}  \}$ is a
good decomposition of $I^n$.
\end{enumerate}

\noindent

\begin{theorem}\label{thm} {\bf (Good Cell Decomposition)}
\begin{enumerate}
\item[{\em ($A_n$)}] Given any definable $X_1 , \dots ,X_m \subseteq
I(R)^{n}$, there is a good decomposition of $I^n$ partitioning
each set $\st{X_i}$.

\item[{\em ($B_{n}$)}] If $f : X \rightarrow I(R)$, with $X \subseteq
I(R)^n$, is definable, then there is a good decomposition
$\mathcal{D}$ of $I^n$ such that for every open $C \in
\mathcal{D}$, either the set $\st(\Gamma f) \cap (C \times \mathbb{R})$ is
empty, or $f$ induces a function $g: C \rightarrow I$.
\end{enumerate}
\end{theorem}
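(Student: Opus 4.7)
I prove $(A_n)$ and $(B_n)$ by simultaneous induction on $n$. The base case $(A_1)$ is immediate from Lemma~\ref{stan2}: each $\st(X_i)$ is a finite union of points and intervals in $I$, so a partition of $I$ by a suitable finite set of reals refining all such endpoints gives a good decomposition partitioning each $\st(X_i)$. For $(B_1)$, o-minimal cell decomposition in $R$ gives finitely many intervals $J_l \subseteq I(R)$ on which $f$ is continuous and monotone; refining the decomposition of $I$ by the standard parts of the endpoints of the $J_l$ ensures that every open cell $C$ either has $C \cap \st(X) = \emptyset$ or $C^h \subseteq J_l$ for a unique $l$. The remaining fiber-preservation condition reduces to a one-dimensional o-minimal fact: the set of $a \in \st(J_l)$ at which $f$ fails to send $\st$-fibers to single $\st$-fibers is expressible in $\operatorname{St}_1$, and boundedness of $f$ on $J_l$ forces this set to have empty interior, hence by Lemma~\ref{stan2} to be a finite set of points, which a further finite refinement of $I$ absorbs.

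For the inductive step $(A_n), (B_n) \Rightarrow (A_{n+1})$: apply o-minimal cell decomposition in $R^{n+1}$ to the given $X_1, \dots, X_m \subseteq I(R)^{n+1}$, producing cells that are either graphs $\Gamma \phi_j$ or open bands $(\phi_j, \psi_j)$ (with analogous types involving $\pm \infty$) over cells $Z_j \subseteq R^n$, where $\phi_j, \psi_j : Z_j \to I(R)$ are continuous and definable. Apply $(A_n)$ to the $\st(Z_j)$ to obtain a common good decomposition $\mathcal{D}_0$ of $I^n$ partitioning each $\st(Z_j)$, and then apply $(B_n)$ to each $\phi_j, \psi_j$ to refine $\mathcal{D}_0$ into $\mathcal{D}_1$ so that each such function either induces a continuous $\tilde\phi_j$ or $\tilde\psi_j$ on every open cell of $\mathcal{D}_1$, or has empty standard-part graph there. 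Refine once more so that on every open cell the finitely many induced functions are pairwise strictly ordered or coincide identically. Over each open $C \in \mathcal{D}_1$, build good cells in $C \times \R$ as graphs and open bands of the ordered induced functions, invoking Lemma~\ref{inverse} to reduce the treatment of non-open cells to the lower-dimensional case, and use Lemmas~\ref{stan0} and \ref{proj} to verify that the resulting partition of $I^{n+1}$ partitions each $\st(X_i)$. For $(B_{n+1})$: partition $X$ by o-minimal cell decomposition into continuity cells $X_l$ of $f$, apply the just-proved $(A_{n+1})$ to the sets $\st(X_l)$ and $\st(R^{n+1} \setminus X_l)$ for each $l$ so that each open cell $C$ either avoids $\st(X)$ or satisfies $C^h \subseteq X_l$ for a unique $l$, and handle the fiber-bad set on each such open cell by expressing it (via saturation and Lemma~\ref{stan0}(2)) as a set in $\operatorname{St}_{n+1}$ and then iterating $(A_{n+1})$ refinements on decreasing dimension.

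The crucial and most delicate step is the pairwise ordering of induced functions in the proof of $(A_{n+1})$: given two induced $\tilde\phi, \tilde\psi : C \to \R$, we must recognize the coincidence locus $\{a \in C : \tilde\phi(a) = \tilde\psi(a)\}$ as a set of the form $\st(P) \setminus \st(Q)$ with $P, Q \subseteq R^n$ definable, so that $(A_n)$ can separate it from its complement. Lemma~\ref{Qdensitylemma} is the key tool here---it forces a failure of strict ordering to originate from an actual $\Q$-box in $R^n$ on which the underlying $\phi$ and $\psi$ coincide, providing a concrete finitary witness that lets the iterated refinement terminate in finitely many steps.
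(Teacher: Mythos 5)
Your overall strategy --- simultaneous induction, o-minimal cell decomposition in $R^{n+1}$ over a good decomposition of $I^n$, the projection trick (Lemma~\ref{proj} and Lemma~\ref{inverse}) for non-open base cells --- matches the paper. But there is a genuine gap at the heart of $(B_n)$, and a misattribution of where Lemma~\ref{Qdensitylemma} is actually needed.

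The gap is in your treatment of the ``fiber-bad set.'' You assert that the set of $a \in C$ at which $f$ fails to send $\stinv{a}$ into a single $\st$-fiber can be ``expressed (via saturation and Lemma~\ref{stan0}(2)) as a set in $\operatorname{St}_{n+1}$'' and then removed by ``iterating $(A_{n+1})$ refinements on decreasing dimension.'' Neither half of this is justified. The fiber-bad set is the union over $q\in\Q^{>0}$ of the sets $B_q:=\{a:\ \exists\, b_1,b_2\in\stinv{a}\cap X \text{ with } |f(b_1)-f(b_2)|>q\}$; each $B_q$ is plausibly a $\st$-set by your saturation argument, but the full union over $q$ is not, and the fiber-bad set need not even be closed. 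And ``iterating on decreasing dimension'' has no termination argument: the decomposition is always of $I^n$, and removing one $\st$-set need not shrink the remaining bad locus. The paper avoids all of this with a single idea you have not used: take a $C^1$-cell decomposition of $X$ on which each $\partial f/\partial x_i$ has constant sign, and form the sets $P(i)$ where $\partial f/\partial x_i$ is infinitely large in absolute value. If $a\in\st P$ lies outside every $\st P(i)$ and $(x,y_1),(x,y_2)\in\st(\Gamma f)$ with $y_1\neq y_2$, the Mean Value Theorem applied along the infinitesimal segment joining two preimages $a,b\in P$ forces some partial derivative to be noninfinitesimally large somewhere on that segment, contradicting $a\notin\st P(i)$. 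Lemma~\ref{Qdensitylemma} is then used precisely to show each $\st P(i)$ has empty interior (a $\Q$-box in $P(i)$ would make the $\Q$-bounded $f$ unbounded), so $(A_n)$ can discard them. This is the derivative/MVT argument your proof needs and does not contain; without it, the claim that a good decomposition forces $f$ to induce a function on every surviving open cell is not established.

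A secondary issue: you describe Lemma~\ref{Qdensitylemma} as ``the key tool'' for recognizing the coincidence locus of two induced functions $\tilde\phi,\tilde\psi$. That is not where it is used, and it would not help: the coincidence locus $\{a:\tilde\phi(a)=\tilde\psi(a)\}$ may have nonempty interior (the functions can agree on a full-dimensional piece). The coincidence locus is handled, as in the paper's Claim~2, by writing $\pi_n^{n+1}(\st(\Gamma\phi)\cap\st(\Gamma\psi))$ as a difference of $\operatorname{St}_n$-sets via Lemma~\ref{stan1}, part~(1) of Lemma~\ref{stan0}, and Lemma~\ref{gooddifference}, and then feeding this into $(A_n)$; no $\Q$-box argument is involved there.
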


\begin{proof}
We proceed by induction on $n$. Item $(A_1)$ holds by Lemma~\ref{stan2}.
We now assume $(A_n)$, $n>0$, and first prove $(B_n)$, and then $(A_{n+1})$.

Let $f: X \to I(R)$ be definable with $X\subseteq I(R)^n$.
Take a decomposition $\mathcal{P}$
of $R^n$ that partitions $I(R)^n$ and $X$ such that if $P$ is an
open cell of $\mathcal{P}$ contained in $X$, then
$f$ is continuously differentiable on $P$ and $\partial f/\partial x_i$ has
constant sign on $P$ for $i=1,\dots,n$.
Let $P\in \mathcal{P}$ be an open cell contained in $X$, and let
 $i\in \{1,\dots,n\}$.

Consider first the case that
$(\partial f/\partial x_i)>0$ on $P$, and put
$$P(i):= \{a\in P: (\partial f/\partial x_i)(a)> \Q\}, $$
so $\st P(i)\in \text{St}_n$ by Lemma~\ref{stan1}. Then
the set $\st P(i)\subseteq I^n$ has empty interior:
otherwise, Lemma~\ref{Qdensitylemma} gives a $\Q$-box $B\subseteq P(i)$,
but then $f$ could not be $\Q$-bounded on $B$, a contradiction.
In case $(\partial f/\partial x_i)\le 0$ on $P$, put
$$P(i):= \{a\in P: (\partial f/\partial x_i)(a) < \Q\}, $$
and then $\st P(i)\in \text{St}_n$ and
$\st P(i)$ has empty interior, by similar reasoning.

By $(A_n)$ we have a good decomposition $\mathcal{D}$ of $I^n$
partitioning $\st P$ and $\st \partial P$ whenever $P\in \mathcal{P}$ is
open and $P\subseteq I(R)^n$, and all $\st P(i)$, $1\le i \le n$, for which
$P\in \mathcal{P}$ is open and contained in $X$. We are going to show that
$\mathcal{D}$ has the property required by $(B_n)$.
Suppose $C\in \mathcal{D}$ is open.
Take $P\in \mathcal{P}$ such that $C\subseteq \st P$. Then $P$ is an open cell
contained in $I(R)^n$, so $C\cap \st \partial P = \emptyset$.

\medskip\noindent
{\em Claim 1}.\  $C^{\text{h}} \subseteq P$.

\medskip\noindent
To see this, let $a\in C^{\text{h}}$ and suppose $a\notin P$.
Take $b\in P$ with $\st a = \st b$, and note that the straight line segment
connecting $a$ to $b$ must contain a point $p\in \partial P$, but then
$\st p = \st a\in C$, a contradiction.

\medskip\noindent
Suppose now that $\st(\Gamma f)\cap (C\times \R)\ne \emptyset$. It
remains to show that then $f$ induces a function $C\to I$. It
follows from Claim 1 that $P\subseteq X$. Let $x\in C$ be given.
Then there is $y$ in $I$ with $(x,y)\in \st(\Gamma f)$, and there is
only one such $y$: if $(x,y_1), (x,y_2)\in \st( \Gamma f)$, with
$y_1\ne y_2$, take $a,b\in P$ with $\st a = x= \st b$ and $\st f(a)=
y_1$ and $\st f(b)= y_2$. By Claim 1, the infinitesimal line segment
connecting $a$ and $b$ is entirely contained in $P$, and by the Mean
Value Theorem this line segment must contain a point $p\in P(i)$
with $i\in \{1,\dots,n\}$, so $\st p = x \in \st P(i)$,
contradicting $C\cap \st P(i)=\emptyset$. Thus $f$ induces a
function $C\to I$. This finishes the proof of $(B_{n})$.

\bigskip\noindent
Towards proving $(A_{n+1})$, we first establish the following.

\medskip\noindent
{\em Claim 2}.\ Let $C_1,\dots,C_m\subseteq I^{n+1}$ be good cells;
then there is a good
decomposition of $I^{n+1}$ that partitions each $C_k$.

\medskip\noindent
To prove this, take functions $\phi_1,\dots,\phi_M$, ($M\in \N$),
where each $\phi_i\in G(D_i)$,
$D_i$ a good cell in $I^n$, such that each $C_k$ is of the form
$\Gamma \phi_i$ or $(\phi_i, \phi_j)$ (where in the latter case $D_i=D_j$).
Let $1\le i < j \le M$, and put
$$  D_{ij}:= \pi_n^{n+1}(\Gamma \phi_i \cap \Gamma \phi_j).$$
We show there are definable $P,Q\subseteq I(R)^n$
such that $D_{ij}=\st(P)\setminus \st(Q)$. To get such $P,Q$, take
$f: X \to I(R)$ and $g: Y \to I(R)$ with $X,Y\subseteq I(R)^n$,
such that $f$ induces $\phi_i$ and $g$ induces $\phi_j$.
It is easy to check that then
$$D_{ij}=\pi_n^{n+1}\big(\st(\Gamma f) \cap \st(\Gamma g)\big) \cap D_i \cap D_j,$$
so $D_{ij}$ has the desired form, by part (1) of Lemma~\ref{stan0} and
by Lemmas~\ref{stan1} and
\ref{gooddifference}.
By $(A_n)$ we can take a good decomposition
$\mathcal{D}$ of $I^n$ that partitions all $D_i$ and all $D_{ij}$.
It follows easily that there is a good decomposition $\mathcal{C}$ of
$I^{n+1}$ that partitions all $C_k$ such that
$\{\pi_n^{n+1}(C): C\in \mathcal{C}\}=\mathcal{D}$. This finishes the proof
of Claim 2.

\medskip\noindent To prove $(A_{n+1})$ we note that by cell decomposition in
the structure $R$ and Claim 2 it suffices to establish the
following special case:

\medskip\noindent
{\em Claim 3}.\ Let $X \subseteq I(R)^{n+1}$ be a cell in $R^{n+1}$; then
$\st(X)$ is a finite union of good cells in $\R^{n+1}$.

\medskip\noindent
Assume first that $X=\Gamma f$, with $f: p^{n+1}_{n} X \rightarrow
I(R)$. By $(A_{n})$ and $(B_{n})$, we have a finite partition
$\mathcal{P}$ of $\st(p^{n+1}_{n}X)$ into good cells, such that if
$C \in \mathcal{P}$ is open, then $f$ induces a function $C
\rightarrow I$, so $\st(X)\cap(C\times I)$ is a good cell. Consider
next a cell $C \in \mathcal{P}$ that is not open. Let
$i=(i_1,\dots,i_n)$ be such that $C$ is a good $i$-cell, take $k
\in\{1,\dots ,n \}$ such that $i_k = 0$, and consider the map
$$\pi :\ \mathbb{R}^{n+1} \rightarrow \mathbb{R}^{n}, \qquad
\pi(x_1,\dots ,x_{n+1})= (x_1 , \dots , x_{k-1} , x_{k+1} , \dots
,x_{n+1} ).$$
It is easy to see that $\pi |_{C \times I}: C \times
I \rightarrow \pi (C \times I)$ is a homeomorphism.
By Lemma \ref{proj}, the set
$\pi\big( \st(X) \cap (C \times I)\big)$ is a difference
of sets in $\text{St}_n$. Thus by $(A_{n})$,
$$\pi\big( \st(X) \cap (C \times I)\big)=  \bigcup_{i=1}^m E_i,$$
where $E_1,\dots,E_m\subseteq I^n$ are good cells. Then
$$\st(X) \cap (C \times I)\ = \bigcup_{i=1}^m \pi^{-1}(E_i)\cap (C \times I),$$and each  $\pi^{-1}(E_i)\cap (C \times I)$ is a good cell by
Lemma~\ref{inverse}.
It follows that Claim 3 holds for $X=\Gamma{f}$.

Next, assume that $X = (f , g)$ where $f,g:\ p^{n+1}_{n} X\  \to
I(R)$, $f<g$. By $(B_n )$, we have a finite partition $\mathcal{P}$
of $\st{(p^{n+1}_{n}X)}$ into good cells such that if $C \in
\mathcal{P}$ is open, then both $f$ and $g$ induce functions on $C$.
By $(A_n )$, we can take a finite partition $\mathcal{P'}$ of
$\st{(p^{n+1}_{n}X)}$ into good cells such that $\mathcal{P'}$
partitions each cell $C \in \mathcal{P}$ and for every open $C \in
\mathcal{P}$ it partitions the set $\{ \st{x} \in C: \; \st{f(x)}=
\st{g(x)}\}$. So if $C \in \mathcal{P'}$ is open, then $\st{X} \cap
(C \times I)$ is a good cell. If $C \in \mathcal{P'}$ is not open,
then we show
in the same way as in the case $X= \Gamma f$ that $\st{X} \cap (C
\times I)$ is a finite union of good cells.

\end{proof}

\noindent
A {\em good decomposition of $\R^n$\/} is a special kind of
partition of $\R^n$ into finitely many good cells. The definition is
by recursion on $n$:
\begin{enumerate}
\item[(i)] a good decomposition of $\R^1=\R$ is a collection $$\{(c_0 , c_1
),(c_2 , c_3 ), \dots ,(c_k , c_{k+1}), \{c_1 \} , \dots ,\{c_k \}
\}$$ of intervals and points in $\R$ where $ c_1 < \dots < c_k$ are
real numbers and $c_0 = - \infty$, $c_{k+1}= \infty$;
\item[(ii)] a good decomposition of $\R^{n+1}$ is a finite partition
$\mathcal{D}$ of
$\R^{n+1}$ into good cells such that
$\{\pi^{n+1}_{n} C:\ C\in \mathcal{D}\}$ is a good decomposition of $\R^n$.
\end{enumerate}
We set $J:= (-1,1) \subseteq \R$ and $J(R):= (-1,1) \subseteq R$. We
shall use the definable homeomorphism

$$\tau_n : R^n \rightarrow J(R)^{n}: (x_1 \dots ,x_n ) \mapsto
(\frac{x_1 }{\sqrt{1 + x_{1}^2}} , \dots , \frac{x_n
}{\sqrt{1+x_{n}^{2}}}),$$ and we also let $\tau_n$ denote the
homeomorphism

$$\tau_n : \mathbb{R}^n \rightarrow J^{n}: (x_1 \dots ,x_n ) \mapsto
(\frac{x_1 }{\sqrt{1 + x_{1}^2}} , \dots , \frac{x_n
}{\sqrt{1+x_{n}^{2}}}).$$ One easily checks that $\tau_1 : R
\rightarrow J(R)$ induces $\tau_1 : \mathbb{R} \rightarrow J$.

\begin{corollary}\label{reduction}
If $X_1 , \dots ,X_m \subseteq  R^n$ are definable, then there is a
good decomposition of $\mathbb{R}^n$ partitioning every $\st{X_i}$.
\end{corollary}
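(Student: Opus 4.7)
The strategy is to reduce to the bounded setting through the definable homeomorphism $\tau_n : R^n \to J(R)^n$. For $i = 1, \ldots, m$ set $Y_i := \tau_n(X_i) \subseteq J(R)^n \subseteq I(R)^n$, and in addition let $Y_0 := I(R)^n \setminus J(R)^n$, which is also definable in $R$. A direct check gives $\st(Y_0) = \partial J^n := I^n \setminus J^n$: a point in $I^n$ has some coordinate equal to $\pm 1$ precisely when it is the standard part of a point in $I(R)^n$ with that coordinate equal to $\pm 1$. Applying Theorem~\ref{thm}($A_n$) to $Y_0, Y_1, \ldots, Y_m$ then yields a good decomposition $\mathcal{D}$ of $I^n$ that partitions each $\st(Y_i)$ and also $\partial J^n$. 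Consequently every cell of $\mathcal{D}$ lies entirely in $J^n$ or entirely in $\partial J^n$.

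Let $\mathcal{D}' := \{C \in \mathcal{D} : C \subseteq J^n\}$, a partition of $J^n$ by good cells that refines each $\st(Y_i) \cap J^n$. Since $\tau_1 : R \to J(R)$ sends $\mathcal{O}$ into itself and satisfies $\st(\tau_1(x)) = \tau_1(\st(x))$ for $x \in \mathcal{O}$, while conversely $\st(\tau_1(x)) \in J$ forces $x \in \mathcal{O}$, a coordinate-wise argument yields the identity $\tau_n(\st(X_i)) = \st(Y_i) \cap J^n$. Hence the collection $\{\tau_n^{-1}(C) : C \in \mathcal{D}'\}$ partitions each $\st(X_i)$; it remains only to verify that this collection is a good decomposition of $\mathbb{R}^n$.

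For this last step I would proceed by induction on $n$ to show that if $C \subseteq J^n$ is a good $(i_1, \ldots, i_n)$-cell then $\tau_n^{-1}(C)$ is a good $(i_1, \ldots, i_n)$-cell in $\mathbb{R}^n$, and that projections to $\mathbb{R}^{n-1}$ are preserved under the pullback. In dimension one, intervals $(-1, c_1)$ and $(c_k, 1)$ in $\mathcal{D}'$ pull back to the half-lines $(-\infty, \tau_1^{-1}(c_1))$ and $(\tau_1^{-1}(c_k), +\infty)$, while bounded intervals and singletons pull back to the corresponding bounded intervals and singletons. Inductively, a graph cell $\Gamma h \subseteq J^{n+1}$ (so $h \in G(D)$ with $h(D) \subseteq J$ and $D \subseteq J^n$) pulls back to $\Gamma(\tau_1^{-1} \circ h \circ \tau_n)$, and an interval cell $(f, g) \subseteq J^{n+1}$ pulls back to a cell of type $(f', g')$, $(-\infty, g')$, $(f', +\infty)$, or $\tau_n^{-1}(D) \times \mathbb{R}$ (where $f' := \tau_1^{-1} \circ f \circ \tau_n$ and analogously for $g'$, with $f \equiv -1$ yielding $-\infty$ and $g \equiv 1$ yielding $+\infty$). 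The main obstacle here is checking that $\tau_1^{-1} \circ h \circ \tau_n$ genuinely lies in $G(\tau_n^{-1}(D))$: given $h$ induced by a definable $f : X \to R$, the candidate $R$-definable witness is the map $x \mapsto \tau_1^{-1}(f(\tau_n(x)))$ on $\tau_n^{-1}(X)$, and the hypothesis $h(D) \subseteq J$ guarantees that for $a$ in the hull of $\tau_n^{-1}(D)$, $\st(f(\tau_n(a))) = h(\tau_n(\st(a)))$ lies in $J$, so $f(\tau_n(a))$ is bounded away from $\pm 1$ and $\tau_1^{-1}$ maps it into $\mathcal{O}$.
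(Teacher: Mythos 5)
Your proof is correct and follows essentially the same route as the paper's: reduce to the bounded case via $\tau_n$, apply Theorem~\ref{thm}($A_n$) to the images, and then show by induction on $n$ that pulling back good cells of $J^n$ along $\tau_n$ yields good cells of $\R^n$. The one place you add detail is in making the step of respecting $J^n$ explicit via $Y_0 = I(R)^n \setminus J(R)^n$ (since $J^n$ itself is not of the form $\st(X)$); the paper asserts ``partitioning $J^n$'' without spelling this out, so your version is slightly more careful, but the underlying argument is the same.
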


\begin{proof}
First note that by the remark right before this corollary, we have
$$\tau_n (\st{X_i })= \st{\tau_n (X_i )} \cap J^n$$ for all $i$.
Hence by Theorem \ref{thm} we have a good decomposition
$\mathcal{D}$ of $I^n$ partitioning $J^n$ and every $\tau_n (\st{X_i
})$.
\begin{trivlist}
\item{\em Claim.\/ }{If $D \subseteq J^n$ is a good cell, then
$\tau_{n}^{-1} (D) \subseteq \mathbb{R}^n$ is also a good cell.}
\item{We prove this by induction on $n$.
The claim clearly holds for $n=1$. Assume it holds for a certain $n\ge 1$,
and let $D \subseteq J^{n+1}$ be a good cell. Put $C:= \pi_n^{n+1} D$.
We first consider the case $D = \Gamma g$, where $g: C \rightarrow J$
is induced by $f: X \rightarrow R$ with $X \subseteq R^{n}$.
We can arrange that $X\subseteq J(R)^n$ and $f(X)\subseteq J(R)$. Then
$$\tau_{n+1}^{-1}(D)=\Gamma \tilde{g}, \quad \tilde{g}= \tau^{-1}
\circ g \circ \tau_{n}|_{\tau_{n}^{-1}C}:\
\tau_{n}^{-1} (C ) \rightarrow \R.$$
The set $\tau_{n}^{-1} C$ is a good cell by the inductive assumption and
$\tilde{g}$ is induced by $\tau^{-1} \circ f \circ
\tau_{n}|_{\tau_{n}^{-1}X}$. Thus $\tau_{n+1}^{-1}(D)=\Gamma \tilde{g}$
is a good cell in $\R^{n}$.

If $D = (-1 ,g)$ or $D=(g,1 )$, where $g$ is as above, then
$\tau_{n+1}^{-1}D = (-\infty , \tilde{g})$ or $\tau_{n+1}^{-1}D =
(\tilde{g},\infty )$, with $\tilde{g}$ defined as
above. We proceed likewise in the case $D = (g_1,g_2)$ with
$g_1, g_2: C \rightarrow J$.
Finally, if $D=C\times (-1,1)$, then we have
$\tau_{n+1}^{-1}D= (\tau_n^{-1}C) \times \R$.
  This concludes the proof of the claim.

}
\end{trivlist}
It follows that the collection of all $\tau^{-1}_{n} D$, where $D \in
\mathcal{D}$ and $D\subseteq J^n$, is a good decomposition of
$\mathbb{R}^{n}$ partitioning every $\st{X_i}$.

\end{proof}

\noindent
Theorem~\ref{intrthm} from the Introduction is now obtained as
follows. Let $n$ be given. By Corollary \ref{reduction} and
Lemma \ref{gooddifference}, the finite unions of sets
$\st(X) \setminus \st(Y)$ with definable $X,Y\subseteq R^n$ are exactly the
finite unions of good cells in $\R^n$, and these finite unions are also
the elements of a boolean algebra of subsets of $\R^n$. Also,
if $X$ is a finite union of good cells in $\R^n$, then
$X\times \R$ and $\R\times X$
are finite unions of good cells in $\R^{n+1}$. Finally,
the $\pi_n^{n+1}$-image of a finite union of good cells in $\R^{n+1}$
is clearly a finite union of good cells in $\R^n$.

\end{section}

\begin{section}{Closed sets and connected sets}

\medskip\noindent
In this section $n\ge 1$. For $x\in R^n$, and definable $Y\subseteq R^n$, put
$$|x|:= \max_i|x_i|, \quad d(x,Y):= \inf{\{|x-y|: \; y \in X\}} \in R\cup \{+\infty\}.$$
Likewise, for $x\in \R^n$ and any set $Y\subseteq \R^n$,
$$|x|:= \max_i|x_i|, \quad d(x,Y):= \inf{\{|x-y|: \; y \in X\}} \in \R\cup \{+\infty\}.$$

\begin{proposition}\label{closed}
The closed subsets of $\R^n$ definable in $\R_{\ind}$ are exactly
the sets $\st{X}$ with definable $X \subseteq R^n$.
\end{proposition}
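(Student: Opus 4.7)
My plan splits into the two inclusions. The easy direction follows from the observations at the start of Section~2: every set $\st(X)$ with $X\subseteq R^n$ definable in $R$ is closed in $\R^n$ and is a basic relation of $\R_{\ind}$, hence $\R_{\ind}$-definable. For the converse, let $S\subseteq\R^n$ be closed and $\R_{\ind}$-definable; by Corollary~\ref{reduction} there is a good decomposition $\mathcal{D}$ of $\R^n$ partitioning $S$, and since $S$ is closed we get $S=\bigcup\{\overline{C}:C\in\mathcal{D},\,C\subseteq S\}$. Because $\operatorname{St}_n$ is closed under finite unions (see the note at the start of Section~2), everything reduces to the key claim that $\overline{C}\in\operatorname{St}_n$ for every good cell $C\subseteq\R^n$.

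I would prove the key claim by induction on $n$. For $n=1$ good cells are points and (possibly unbounded) intervals; their closures are closed intervals $[a,b]\subseteq\R$, and each equals $\st([a',b'])$ for suitable $a',b'\in R$, with $\pm\infty$ endpoints replaced by infinite elements of $R$ furnished by the saturation of $R$. For the inductive step, let $C\subseteq\R^{n+1}$ be a good cell over a good cell $D\subseteq\R^n$ and fix, by induction, a definable $Z\subseteq R^n$ with $\st(Z)=\overline{D}$. The case $C=D\times\R$ is immediate from $\overline{C}=\overline{D}\times\R=\st(Z\times R)$. For the remaining cases $C=(f,g)$, $C=(-\infty,f)$, $C=(f,+\infty)$, and $C=\Gamma g$ with $f,g\in G(D)$ induced by definable $\phi,\psi$, I would use the following construction: since $\phi|_{D^h}$ and $\psi|_{D^h}$ are continuous with values in $\mathcal{O}$ (part of the definition of ``induces''), by $(2^{\aleph_0})^+$-saturation there is a definable open $P_0\subseteq R^n$ containing $D^h$ on which both $\phi$ and $\psi$ are continuous; fixing an infinite $M\in R$, set $P:=P_0\cap\{x':|\phi(x')|,|\psi(x')|\le M\}$, so $D^h\subseteq P$. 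Then take $W:=\Gamma(\phi|_P)$ for the graph case and $W:=\{(x',y')\in P\times R:\phi(x')\le y'\le\psi(x')\}$ for the band case $C=(f,g)$ (analogously for the half-infinite bands, using an infinite bound for $y'$).

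The main obstacle is verifying $\st(W)=\overline{C}$. The inclusion $\st(W)\subseteq\overline{C}$ follows from the continuity and boundedness of $\phi,\psi$ on $P\supseteq D^h$: for $(x',y')\in W\cap\mathcal{O}^{n+1}$, the standard part $(\st x',\st y')$ inherits the defining relations of $C$ in the limit and thus lies in $\overline{C}$. The reverse inclusion $\overline{C}\subseteq\st(W)$ uses $(2^{\aleph_0})^+$-saturation: for $(x_0,y_0)\in\overline{C}$, the countable family of formulas ``there exists $x'\in P$ with $|x'-x_0^*|<\epsilon$ and $|\phi(x')-y_0|<\epsilon$'' (over $\epsilon\in\Q^{>0}$, plus analogues for $\psi$) is finitely satisfiable by the very definition of topological closure, hence realized by some $x'\in R^n$, which witnesses $(x_0,y_0)\in\st(W)$. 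This completes the induction and the proposition.
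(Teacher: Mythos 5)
Your easy direction and the reduction to the key claim match the paper exactly: a closed $\R_{\ind}$-definable $S$ is, by Corollary~\ref{reduction}, a finite union of good cells, and since $S$ is closed it equals the union of their closures, so it suffices to show $\closure{C}\in\operatorname{St}_n$ for every good cell $C$. The paper proves this key claim via a different mechanism than you do, and your version has a real gap.

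The gap is in the asserted inclusion $\st(W)\subseteq\closure{C}$. You take a definable open $P\supseteq D^h$ on which $\phi,\psi$ are continuous and bounded, and set, e.g., $W=\Gamma(\phi|_P)$ for the graph case, claiming that for $(x',y')\in W\cap\mathcal{O}^{n+1}$ the point $(\st x',\st y')$ ``inherits the defining relations of $C$ in the limit''. This is false in general: nothing in your construction forces $\st(P)\subseteq\closure{D}$. Any $x'\in P\cap\mathcal{O}^n$ with $\st(x')\notin\closure{D}$ yields $(\st x',\st\phi(x'))\in\st(W)\setminus\closure{C}$. Concretely, take $n=1$, $D=(0,1)$, $C=\Gamma g$ with $g\equiv 0$, and $\phi\equiv 0$ on the definable open $P_0=(-2,2)\supseteq D^h$; then $W=\Gamma(\phi|_{P_0})$ and $\st(W)=[-2,2]\times\{0\}$, strictly larger than $\closure{C}=[0,1]\times\{0\}$. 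The same issue afflicts the band and half-infinite cases. Shrinking $P$ by intersecting with a set $Z$ from the inductive hypothesis with $\st(Z)=\closure{D}$ does not help either, since the hypothesis $\st(Z)=\closure{D}$ alone neither guarantees $D^h\subseteq Z$ nor that $\st(P\cap Z)=\closure{D}$.

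What is needed is a \emph{single} definable set sandwiched between the type-definable $D^h$ and a set whose standard part is still contained in $\closure{D}$, and that existence statement is precisely what the paper's proof manufactures. The paper's Claim~1 builds, by induction, a \emph{decreasing definable family} $\{X_r\}_{r\in(0,r_0)}$ with $\st\bigl(\bigcap_{\epsilon}X_\epsilon\bigr)=C$ (intersection over positive infinitesimals $\epsilon$), arranged so that $\st(X_r)\subseteq C$ for all noninfinitesimal $r$. Claim~2 then uses $(2^{\aleph_0})^+$-saturation over the parameter $r$ to find a single infinitesimal $\epsilon$ with $\st(X_\epsilon)=\closure{C}$. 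Your approach replaces this one-parameter family and the accompanying saturation step with a one-shot construction, and that is exactly where it breaks; to repair it you would essentially have to reintroduce the parametrized family, because there is no other source of a definable set whose standard part is pinned to $\closure{D}$ from above.

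Your reverse inclusion $\closure{C}\subseteq\st(W)$ is fine in spirit (the finite satisfiability argument works once $D^h\subseteq P$), and the treatment of $C=D\times\R$ is correct. But as written the argument does not establish the proposition.
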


\begin{proof}
The result will follow from Corollary \ref{reduction} once we show
that the closure of a good cell in $\R^{n}$ is of the form $\st{X}$
for some definable $X \subseteq R^n$. Let
$\epsilon$ range over positive infinitesimals. Let $C \subseteq \R^n$ be
a good cell.

\medskip\noindent
{\em Claim 1.\/ } There is an $r_0 \in \Q^{>0}$ and a
definable family $\{ X_r \}$ of subsets of
$R^n$, indexed by the
$r \in (0,r_0) \subseteq R$, such that
$$ 0 < r < r' < r_0\Longrightarrow X_{r'} \subseteq
X_{r};   \qquad \st{\big(\bigcap_{\epsilon} X_{\epsilon}\big)} = C .$$
The proof is by induction on $n$. If $C = \{ c \} \subseteq \R$,
then we take any
positive rational $r_0$ and $a \in R$ with $\st{a}=c$ and define $X_r
:= \{a \}$ for every $r \in (0,r_0 )$. If $C \subseteq \R$ is an
open bounded interval, then take $a,b \in R$ such that $\st{a}<
\st{b}$ are the endpoints of $C$ and let $X_r = (a+r , b-r)$ with
$r \in (0,r_0 )$ where $r_0$ is some positive rational $<
\frac{b-a}{2}$. The family $\{X_r \}$ has the desired properties.
The case that $C$ is an unbounded interval is left to the reader.

Assume the claim holds for certain $n\geq 1$, and let $D$ be a good
cell in $\R^{n+1}$. For $C := \pi_{n}^{n+1}D$, let $\{ X_r \}$ with
$r \in (0,r_0 )$ be a definable family as in the claim. We may
assume that $r_0 < 1$.

Consider first the case $D = \Gamma g$ where $g: C \rightarrow \R$
is induced by a definable $f: X \rightarrow R$, $X \subseteq R^{n}$.
After replacing $\{ X_r \}$ by $\{ X_r \cap X \}$ if necessary, we
may assume that $X_r \subseteq X$ for every $r$. We define
$$Y_{r} := \{ (x,y) \in R^{n+1} : \, x \in X_r \mbox{ and } f(x)=y \}.$$
It is easy to see that then $\st{\big(\bigcap Y_{\epsilon}\big)} = D$.

Next, assume $D=(\phi_1, \phi_2)$ with
$\phi_1, \phi_2: C \rightarrow \R$ induced by $f_1 : X_1
\rightarrow R$ and $f_2 : X_2 \rightarrow R$. Without loss of generality
 $X=X_1 = X_2$, $f_1 < f_2$ on $X$, and
$X_r \subseteq X$ for all $r \in (0,r_0 )$. For $r \in (0,r_0 )$, define
\[
\begin{array}{lll}
Y_{r} &:=& \{ (x,y) \in R^{n+1} : \, x \in X_r \mbox{ and }\\
&& f_1 (x) + \frac{f_2 (x)- f_1 (x)}{2} \, r < y < f_2 (x) -
\frac{f_2 (x)- f_1 (x)}{2}\, r \}.
\end{array}
\]
It is clear that if $0<r < r' <r_0$, then $Y_{r' } \subseteq
Y_{r }$. To get $D = \st{\big(\bigcap_{\epsilon} Y_{\epsilon}\big)}$, let
$x \in C^{h}$. Then $f_2 (x) - f_1 (x) > q$ for some $q \in
\Q^{>0}$, hence for $r\in (0,r_0)$ we have:
$\frac{f_2 (x)-f_1 (x)}{2} \, r$ is infinitesimal iff $r$ is infinitesimal.

The cases $D = C \times \R$, $D = ( - \infty , g)$, $D = (g
, \infty )$ are left to the reader.

\bigskip\noindent
{\em Claim 2.\/ } Let $\{ X_r \}$, $r \in (0,r_0 )$, be a
definable family as in Claim 1. Then there is an $\epsilon$ such that
$\st{X_{\epsilon }} = \closure{C}$.

\medskip\noindent
For each $\epsilon$ we have
$C \subseteq \st{X_{\epsilon}}$, hence $\closure{C} \subseteq
\st{X_{\epsilon }}$. Let $a\in \R^n\setminus \closure{C}$.
Pick $q_a\in \Q^{>0}$ with $d(a, \closure{C})>q_a$ and pick
$b_a\in \mathcal{O}^n$ with $\st(b_a)=a$.
Since $\st{X_r}\subseteq C$ for noninfinitesimal $r\in (0,r_0)$, this yields
$d(b_a,X_r) > q_a$  for such $r$. By o-minimality of $R$ this gives
$d(b_a, X_\epsilon)>q_a$ for all sufficiently large
(positive infinitesimal) $\epsilon$. Then by saturation we obtain an
$\epsilon$ such that $d(b_a, X_\epsilon)>q_a$ for {\em all\/}
$a\in \R^n\setminus \closure{C}$. For this $\epsilon$ we have
$a \notin \st{X_{\epsilon }}$ for all $a\in \R^n\setminus \closure{C}$,
and thus $\st{X_{\epsilon }} = \closure{C}$.
\end{proof}

\begin{lemma}
Suppose $X \subseteq \R^n$ is closed. Then $X^h$ is the intersection of a
type-definable set in $R^n$ with $\mathcal{O}^n$. In particular, if $X$ is
bounded, then $X^h$ is type-definable.
\end{lemma}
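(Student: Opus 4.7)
The plan is to use the topology of $\R^n$: since $X$ is closed, $\R^n\setminus X$ is open and, being second countable, can be written as $\bigcup_{i\in\N}B(a_i,r_i)$ with $a_i\in\Q^n$ and $r_i\in\Q^{>0}$. The idea is then to translate ``$\st y\in X$'' into a countable conjunction of $R$-definable inequalities, each expressing that $\st y$ stays out of one of the basic balls $B(a_i,r_i)$.

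Viewing each $a_i$ as a point of $R^n$ through $\Q\subseteq R$, I would define
\[ T \;:=\; \bigcap_{i\in\N,\ m\ge 1} \{\, y\in R^n : |y-a_i|\ge r_i-1/m \,\}, \]
a type-definable subset of $R^n$ over the countable parameter set $\Q$. To check the equality $X^h=T\cap\mathcal{O}^n$, I use that for $y\in\mathcal{O}^n$ the map $\st$ commutes with $-$ and $|\cdot|$ and fixes every $a_i\in\Q^n$, so $\st|y-a_i|=|\st y-a_i|$. Thus $y\in T\cap\mathcal{O}^n$ holds iff $|\st y-a_i|\ge r_i$ for every $i$, which says precisely that $\st y$ avoids each $B(a_i,r_i)$, i.e.\ $\st y\in X$, i.e.\ $y\in X^h$.

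For the ``in particular'' clause, if $X$ is bounded, pick $N\in\N$ with $X\subseteq(-N,N)^n$; then $\st y\in X$ forces $|y_j|\le N+1$ for every $j$, so $X^h$ sits inside the definable set $B:=\{y\in R^n:|y_j|\le N+1\text{ for all }j\}$, which is contained in $\mathcal{O}^n$. Hence $X^h=T\cap B$ is type-definable outright. There is no substantive obstacle here; the only points requiring a little care are the appeal to second countability of $\R^n$ to keep the defining collection of formulas small, and the elementary (but crucial) observation that $\st$ commutes with the norm/subtraction when $a_i$ is standard, which is what allows ``$\st y$ outside an open ball'' to be captured by a countable partial type in $R$.
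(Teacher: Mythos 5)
Your proof is correct and takes essentially the same approach as the paper: both use second countability of $\R^n$ to write $\R^n\setminus X$ as a countable union of rational basic open sets, and then translate ``$\operatorname{st}y\in X$'' into a countable partial type over $\Q$. The paper phrases the type via distances $d(x,Y_i)<1/n$ to definable sets $Y_i$ with $X=\bigcap_i\operatorname{st}Y_i$, while you work directly with the norms $|y-a_i|\ge r_i-1/m$; the two formulations are interchangeable, and your treatment of the bounded case by intersecting with a definable $R$-box is exactly what the paper's ``follows immediately'' elides.
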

\begin{proof}
The complement of $X$ in $\R^n$ is a countable
union of open boxes, so $X= \bigcap_{i\in \mathbb{N}}\st Y_i$ where each
$Y_i\subseteq R^n$ is definable. Let
$$Y := \{ x \in R^n : \, d(x,Y_i ) < \frac{1}{n}\ \text{ for all $i$ and
all $n>0$}\}.$$
Then $Y$ is type-definable, and it is easy to check that
 $X^h = Y \cap \mathcal{O}^n$.
The second part of the lemma follows immediately from the first
part.
\end{proof}

\begin{proposition}
Let $X \subseteq R^n$ be definable, strongly bounded, and definably
connected. Then $\st{X} \subseteq \R^n$ is connected.
\end{proposition}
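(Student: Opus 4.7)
\medskip\noindent
\textbf{Proof proposal.} The plan is to argue by contradiction: assuming a topological disconnection of $\st{X}$ in $\R^n$, I will construct two disjoint definable open subsets of $R^n$ that each meet $X$ and together cover it, producing a definable splitting of $X$ and contradicting definable connectedness.

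Suppose $\st{X} = A \sqcup B$ with $A,B$ nonempty, disjoint, and closed in $\R^n$. Since $X$ is strongly bounded, $X \subseteq \mathcal{O}^n$ and $\st{X}$ is a compact subset of $\R^n$; hence $A$ and $B$ are compact, so $d(A,B) > 3\delta$ for some $\delta \in \Q^{>0}$. Cover $A$ by finitely many open balls $B(a_i,\delta)$ with $a_i \in A$ ($i = 1,\dots,k$), and cover $B$ by finitely many open balls $B(b_j,\delta)$ with $b_j \in B$ ($j = 1,\dots,\ell$). Using surjectivity of $\st \colon \mathcal{O}^n \to \R^n$, pick lifts $\alpha_i, \beta_j \in \mathcal{O}^n$ with $\st{\alpha_i} = a_i$ and $\st{\beta_j} = b_j$.

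Form the definable open sets $U := \bigcup_{i=1}^{k} B_R(\alpha_i,\delta)$ and $V := \bigcup_{j=1}^{\ell} B_R(\beta_j,\delta)$ in $R^n$, where $B_R(\cdot,\delta)$ denotes the $\delta$-ball in the sup-norm on $R^n$. I would verify three points. First, $U \cap V = \emptyset$: any common point would force $|\alpha_i - \beta_j| < 2\delta$ for some $i,j$, hence $|a_i - b_j| = \st|\alpha_i - \beta_j| \le 2\delta$, contradicting $d(A,B) > 3\delta$. Second, $X \subseteq U \cup V$: for $x \in X$, $\st{x} \in A \cup B$ lies in some $B(a_i,\delta)$ or $B(b_j,\delta)$, and because $\delta$ is rational the relation $\st|x - \alpha_i| < \delta$ upgrades to the sharp inequality $|x - \alpha_i| < \delta$ in $R$, placing $x$ in $U$ (or analogously in $V$). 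Third, $X \cap U$ and $X \cap V$ are nonempty: each $a \in A$ lifts to some $x \in X$ with $\st{x} = a$ (by surjectivity of $\st$ on $X \subseteq \mathcal{O}^n$), and that $x$ lies in $U$ by the same estimate; likewise for $B$ and $V$.

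Combining these, $X = (X \cap U) \sqcup (X \cap V)$ exhibits $X$ as a disjoint union of two nonempty relatively open definable subsets, contradicting definable connectedness; thus $\st{X}$ is connected. The only technical subtlety is careful handling of strict versus weak inequalities when moving through $\st$; conceptually, the crucial point is that the disjoint compact sets $A$ and $B$ are separated by a \emph{rational} distance, which is exactly what lets me replace the topological disconnection in $\R^n$ by a definable one in $R^n$.
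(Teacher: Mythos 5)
Your proof is correct, but it follows a genuinely different route from the paper's. The paper converts the topological disconnection $\st{X} = Y_1 \mathbin{\dot\cup} Y_2$ into a \emph{definable} disconnection in $\R_{\ind}$ (using that $\R_{\ind}$ is o-minimal, so ``connected'' and ``definably connected'' coincide for $\R_{\ind}$-definable sets), then invokes the preceding lemma that the hulls $Y_i^h$ of closed bounded sets are type-definable, and finally uses a compactness/saturation argument: since $X = (X \cap Y_1^h) \mathbin{\dot\cup} (X \cap Y_2^h)$ with the two pieces disjoint and type-definable and their union definable, each piece is in fact definable, giving a definable disconnection of $X$. You instead exploit the compactness of $\st{X}$ (which follows from strong boundedness) much more directly: the two closed pieces $A, B$ are compact, hence at positive \emph{rational} distance, so you can cover each by finitely many small balls, lift the centers into $\mathcal{O}^n$, and assemble explicit definable open $U, V \subseteq R^n$ that separate $X$. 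Your ``upgrade'' step from $\st|x-\alpha_i|<\delta$ to $|x-\alpha_i|<\delta$ is exactly right, since $\delta$ is standard and the discrepancy is infinitesimal. The trade-off: your argument is more elementary and self-contained, bypassing both the o-minimality of $\R_{\ind}$ and the hull lemma, while the paper's argument is shorter on the page because it leans on machinery already set up for other purposes. Both proofs use strong boundedness essentially — you need it for compactness of $\st{X}$ to get the rational separation; the paper needs boundedness for the hulls to be type-definable rather than merely an intersection with $\mathcal{O}^n$.
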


\begin{proof}
Assume towards a contradiction that $\st{X}$ is not connected. Then
$\st{X}$ is not definably connected with respect to the o-minimal structure
$\R_{\text{ind}}$, \cite{lou}, p. 59. So $\st{X} =
Y_1 \dot\cup Y_2$ where $Y_1 , Y_2$ are nonempty,
definable in $\R_{\text{ind}}$, and closed in $\st{X}$, and thus closed in
$\R^n$. Since
$$X = (X\cap Y_{1}^{h})\ \dot\cup\ (X \cap Y_{2}^{h}),$$ and $Y_{1}^{h}$, $Y_{2}^{h}$
are type-definable and disjoint, the sets $X \cap Y_{1}^{h}, X \cap
Y_{2}^{h}$ are definable, nonempty, and closed in $X$, a
contradiction.
\end{proof}

\end{section}

\begin{section}{Amenability}

Note that the proof of Corollary \ref{reduction} yields that if
$f: X \rightarrow R$ is definable with $X \subseteq R^n$, then there
is a good decomposition $\mathcal{D}$ of $\R^n$ such that if $D \in
\mathcal{D}$ is open, then either $\st{\Gamma f} \cap (D \times \R
)$ is empty or $f$ induces a function $D \rightarrow \R$.

\begin{lemma}\label{derivativeslemma}
Let both $C \subseteq \R^n$ and $X \subseteq R^n$ be open, and
suppose $f:X \rightarrow R$ is definable, $C^1$, and $f,
\frac{\partial f}{\partial x_1 }, \dots , \frac{\partial f}{\partial
x_n }$ induce functions $g, g_1 , \dots , g_n : C \rightarrow \R$.
Then $g$ is $C^1$ and $g_i = \frac{\partial g}{\partial x_i }$ for
all $i$.
\end{lemma}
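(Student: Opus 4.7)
The plan is to fix $c \in C$ and a coordinate index $i$, and to show that $\frac{\partial g}{\partial x_i}(c)$ exists and equals $g_i(c)$ by lifting a one-variable Mean Value Theorem computation from $R$ down through the standard part map. Since each induced function $g_j$ is automatically continuous (by the earlier lemma asserting continuity of induced functions), this will immediately give $g \in C^1$ with the claimed partial derivatives.

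To execute this, fix $c \in C$, pick $q \in \Q^{>0}$ so that the closed $q$-ball around $c$ in the sup norm lies in $C$, and pick $a \in C^h$ with $\st(a) = c$. For any $t \in \R$ with $0 < |t| < q$, choose $t' \in \mathcal{O}$ with $\st(t') = t$; then $\st(a + t'e_i) = c + te_i \in C$, so $a + t'e_i \in C^h \subseteq X$. Applying the one-variable MVT in $R$ to the definable $C^1$ function $s \mapsto f(a + se_i)$ on the segment from $0$ to $t'$, I obtain some $s$ between $0$ and $t'$ in $R$ with
$$f(a + t'e_i) - f(a)\ =\ t' \cdot \frac{\partial f}{\partial x_i}(a + se_i).$$
Since $|s| \le |t'|$, we have $s \in \mathcal{O}$ and $|\st(s)| \le |t| < q$, so $a + se_i \in C^h$; hence $\frac{\partial f}{\partial x_i}(a + se_i) \in \mathcal{O}$ with standard part $g_i(c + \st(s)\, e_i)$, by the hypothesis that $\partial f/\partial x_i$ induces $g_i$.

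Taking standard parts of both sides of the displayed equation and using that $f$ induces $g$ yields
$$g(c + te_i) - g(c)\ =\ t \cdot g_i(c + \st(s)\, e_i),$$
so the difference quotient $\frac{1}{t}\bigl(g(c + te_i) - g(c)\bigr)$ equals $g_i(c + \st(s)\, e_i)$. Since $|\st(s)| \le |t| \to 0$ and $g_i$ is continuous on $C$, the right-hand side tends to $g_i(c)$, proving $\frac{\partial g}{\partial x_i}(c) = g_i(c)$. Continuity of all the $g_i$ then gives $g \in C^1$. The one step requiring care is ensuring the intermediate MVT point $a + se_i$ lies in $C^h$ so that the inducing hypothesis on $\partial f/\partial x_i$ applies there; this is exactly where the openness of $C$ is used, by shrinking $t$ so that $c$, $c + te_i$, and $c + \st(s)\, e_i$ all stay inside a $q$-ball around $c$ contained in $C$.
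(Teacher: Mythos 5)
Your proof is correct and follows essentially the same route as the paper's: fix a point of $C$, lift it to the hull, apply the one-variable Mean Value Theorem in $R$ along a coordinate direction, take standard parts to transfer the identity to $\R$, and conclude via continuity of $g_i$ (which you correctly note is supplied by the earlier lemma on induced functions). The only difference is cosmetic: you are more explicit than the paper in verifying that the intermediate MVT point $a+se_i$ lands in $C^h$ so that the inducing hypothesis applies to $\partial f/\partial x_i$ there, but this is exactly what the paper's choice of $q$ (so that $a+te_i\in C$ and $b+te_i\in X$ for $|t|<q$) is doing implicitly.
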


\begin{proof}
Let $i \in \{1,\dots ,n \}$, let $e_i$ be the $i$-th unit vector in
$R^n$ or in $\R^n$ (according to the context), that is, $e_{ij} = 1$
if $i= j$ and $e_{ij}=0$ otherwise. Let $a \in C$, and take $b \in
C^h$ with $\st{b}=a$. Take $q \in  \Q^{>0}$ such that $a+te_i \in C$
for all $t\in \R$ with $|t| < q$, and also $b+te_i\in X$ for all
$t\in R$ with $|t|< q$. By the Mean Value Theorem we have, for $t\in
R$, $|t| < q$,
$$f(b+te_i)-f(b)= (\partial f/\partial x_i)(b+\theta e_i)\cdot t, \qquad
(\theta\in R,\ |\theta|\le |t|),$$ and taking standard parts this
gives for $t\in \R$, $|t| < q$,
$$g(a +te_i) - g(a)= g_i(a+ \tau e_i)\cdot t, \qquad
(\tau\in \R,\ |\tau|\le |t|).$$
Letting $t\in \R$ go to $0$ in this equality and using the continuity of
$g_i$ shows that
$\frac{\partial g}{\partial x_i }(a)$ exists and equals $g_i(a)$.
Because this
holds for all $i$ we conclude that $g$ is $C^1$.
\end{proof}

\begin{corollary}\label{derivatives}
Let $f:Y \rightarrow R$ with $Y \subseteq R^n$ be definable with
strongly bounded graph. Then there is a good decomposition $\mathcal{D}$
of $\R^n$ that partitions $\st{Y}$ such that if $D \in \mathcal{D}$ is
open and $D \subseteq
\st{Y}$, then $f$ is continuously differentiable on an open
definable $X \subseteq Y$ containing $D^h$, and $f, \frac{\partial
f}{\partial x_1 }, \dots , \frac{\partial f }{\partial x_n }$, as functions
on $X$, induce
functions $g, g_1 , \dots , g_n : D \rightarrow \R$ such that $g$ is
$C^1$ and $g_i = \frac{\partial g}{\partial x_i}$ for all $i$.
\end{corollary}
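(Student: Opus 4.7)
My plan is to combine cell decomposition in $R$ with the initial remark of this section (applied to $f$ and each $\partial f/\partial x_i$), and then to invoke Lemma~\ref{derivativeslemma} cell by cell.

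First I would apply $R$-cell decomposition to obtain a partition $\mathcal{P}$ of $R^n$ into finitely many cells that refines $\{Y, R^n\setminus Y\}$ and such that on every open cell $P \in \mathcal{P}$ with $P \subseteq Y$, the function $f$ is $C^1$ (and hence each $\partial f/\partial x_i$ is continuous) on $P$. Since $f$ has strongly bounded graph, so that $f$ takes values in some $I(R)$ with rational bound, I may also arrange that each $\partial f/\partial x_i$ is bounded on each open cell in $\mathcal{P}$ contained in $Y$, after further refining (by splitting according to whether $|\partial f/\partial x_i|$ exceeds each fixed rational) --- this is only needed to ensure the relevant standard parts are sets in $\operatorname{St}_n$ that we can partition.

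Next, by the remark at the beginning of Section~5, applied successively to $f$ and to each $\partial f/\partial x_i$ regarded as a definable function on the union of the open cells of $\mathcal{P}$ inside $Y$, I obtain for each of these $n+1$ functions a good decomposition of $\R^n$ witnessing the inducing property. Using Claim~2 from the proof of Theorem~\ref{thm} (the existence of a common refinement of good decompositions of $I^n$, lifted to $\R^n$ via Corollary~\ref{reduction}), I can take a single good decomposition $\mathcal{D}$ of $\R^n$ that simultaneously partitions $\st Y$, each $\st P$ and each $\st \partial P$ (for open $P \in \mathcal{P}$ contained in $Y$), and such that for every open $D \in \mathcal{D}$ the inducing property holds for $f$ and for each $\partial f/\partial x_i$ whenever the corresponding standard-part intersection is nonempty.

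Now let $D \in \mathcal{D}$ be open with $D \subseteq \st Y$. Pick $P \in \mathcal{P}$ with $D \subseteq \st P$; then $P$ is open and $P \subseteq Y$, and $D \cap \st\partial P = \emptyset$. By the Claim~1 argument in the proof of $(B_n)$ (a straight-line segment crossing from $P^c$ into a point above $P$ must hit $\partial P$), we get $D^h \subseteq P$. Setting $X := P$, the function $f$ is $C^1$ on $X$ and $D^h \subseteq X \subseteq Y$. By construction, $f$ induces some $g\colon D \to \R$ and each $\partial f/\partial x_i$ induces some $g_i\colon D \to \R$ (these inducing sets are nonempty because $D^h \subseteq P \subseteq X$ ensures that over each point of $D$ there is a point of the respective graph in $\mathcal{O}^{n+1}$). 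Applying Lemma~\ref{derivativeslemma} to $X$, $f$, $g$ and $g_1,\dots,g_n$ finishes the proof.

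The main obstacle is the coordination step: producing one good decomposition $\mathcal{D}$ that simultaneously witnesses the inducing property for all $n+1$ functions $f, \partial f/\partial x_1,\dots,\partial f/\partial x_n$ while also refining $\st P$ and $\st \partial P$ finely enough to force $D^h \subseteq P$. This is handled by the common-refinement mechanism of Claim~2 of Theorem~\ref{thm}; everything else is an application of results already established.
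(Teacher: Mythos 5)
Your overall strategy (apply the Section~5 remark separately to $f$ and to each $\partial f/\partial x_i$, take a common refinement, force $D^h\subseteq P$ via the Claim~1 argument, then invoke Lemma~\ref{derivativeslemma}) is a sensible route, and it differs mildly from the paper, which simply re-runs the proof of $(B_n)$ on the rescaled $f$ with $\Gamma f\subseteq I(R)^{n+1}$ and reads off the hypotheses of Lemma~\ref{derivativeslemma} from that run. However, as written your argument has a genuine gap at the nonemptiness step.

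The remark only guarantees, for each open $D$, that \emph{either} $\st(\Gamma\,\partial f/\partial x_i)\cap(D\times\R)$ is empty \emph{or} $\partial f/\partial x_i$ induces a function on $D$. You assert the empty case cannot occur ``because $D^h\subseteq P\subseteq X$ ensures that over each point of $D$ there is a point of the respective graph in $\mathcal{O}^{n+1}$.'' That is not a valid inference: $D^h$ lying in the domain of $\partial f/\partial x_i$ does not make the \emph{values} of $\partial f/\partial x_i$ on $D^h$ lie in $\mathcal{O}$. A partial derivative of a $\Q$-bounded function can perfectly well take only infinitely large values on a definable set whose standard part has empty interior (e.g.\ $f(x)=\sqrt{x-\delta}$ near an infinitesimal $\delta$), and nothing you have arranged rules this out over $D^h$ without a further argument. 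The missing step is the $\Q$-box/MVT argument from the proof of $(B_n)$: if $|\partial f/\partial x_i|>\Q$ on all of $D^h$, then since $D$ is open, $D^h$ contains a $\Q$-box $B$, and the Mean Value Theorem applied along a rational-length segment in the $x_i$-direction inside $B$ forces $f$ to vary by an infinitely large amount on $B$, contradicting the strong boundedness of $\Gamma f$. You need to supply this; once you do, the empty alternative is ruled out for every relevant $D$, and the rest of your proof goes through.

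A secondary issue: the proposed refinement of $\mathcal{P}$ ``by splitting according to whether $|\partial f/\partial x_i|$ exceeds each fixed rational'' is not a finite refinement, so it is not a legitimate cell decomposition step; and in any case it is unnecessary, since the Section~5 remark already handles unbounded functions via the compression $\tau_n$. You should simply drop that sentence.
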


\begin{proof}
Since $\Gamma{f}$ is strongly bounded, we can reduce to the case that
$\Gamma f \subseteq I(R)^{n+1}$. Then the proof of $(B_n)$ in Theorem
\ref{thm} yields a good decomposition $\mathcal{D}$ of $I^{n}$ that
partitions $\st{Y}$
such that if $C \in \mathcal{D}$ is open and $C
\subseteq \st{Y}$, then there is an open $X \subseteq Y$ such that
$f|_{X}$ and $C$ satisfy the assumptions of Lemma
\ref{derivativeslemma}.
\end{proof}
The following notions are from \cite{nip}. Let $X,Y \subseteq R^n$
be definable. Define
\begin{align*} X \subseteq_0 Y: &\Longleftrightarrow\
\dim{(X\setminus Y)} < n,\\
   X=_0 Y: &\Longleftrightarrow\ X \subseteq _0
Y \text{ and }Y \subseteq_0 X.
\end{align*}
We say that a property holds for {\em
almost all\/} elements of $X$ if it
holds for all elements of $X$ outside a definable subset of
dimension $ < n$. We shall also use this notation and terminology when
$X,Y\subseteq \R^n$ are definable in  $\R_{\text{ind}}$,
with $\R_{\text{ind}}$ replacing $R$.

Let $V[n]$ be the additive monoid of all
definable $f:R^n \rightarrow R^{\geq 0}$ that are bounded with
bounded support, with addition being pointwise addition of
functions. If $f,g \in V[n]$, then by an {\em isomorphism\/} $\psi
:f \rightarrow g$ we mean a definable $C^1$-diffeomorphism $\psi : U
\rightarrow V$ with definable open $U,V \subseteq R^n$ such that
$\supp f \subseteq_0 U$, $\supp g \subseteq_0 V$, and
$$f(x)=|J\psi (x)| g(\psi (x))\ \text{ for almost all }x \in U,$$
where $|J\psi (x)|$ is the absolute value
of the determinant of the Jacobian matrix of $\psi$ at $x \in U$. We
call $f,g \in V[n]$ {\em isomorphic\/} if there is an isomorphism $f
\rightarrow g$. Note that $f\in V[n]$ is isomorphic to $0$ iff
$f(x)=0$ for almost all $x\in R^n$, and that
isomorphism is an equivalence relation on $V[n]$.

\begin{definition}
An {\em $n$-volume\/} is a finitely additive
$I: V[n] \rightarrow [0,\infty]$ such that $I(0)=0$ and $I$ is invariant under
isomorphisms.\footnote{Instead of isomorphism invariance, \cite{nip} requires
that $I(f)=I(g)$ if $f = \sum_{i=1}^{k}f_i$ and
$g = \sum_{i=1}^{k}g_i$ where $f_i ,g_i \in V[n]$ are isomorphic for
all $i$. This gives an equivalent definition.}
\end{definition}
Call a function $f \in V[n]$ {\em amenable\/} if there is an
$n$-volume $I$ such that $0 < I(f)<\infty$. Note that then $f$ is not
isomorphic to $0$.
Call $R$ {\em amenable for volumes\/}
if for every $n$, every $f \in V[n]$ not isomorphic
to $0$ is amenable.

\medskip\noindent
Question from \cite{nip}:
{\em is $R$ amenable for volumes?}
We give here a partial answer.

\bigskip\noindent
For $f\in V[n]$ we put $(0,f):= \{(x,y)\in R^{n+1}: 0 < y < f(x)\}$.
Let $SV[n]$ be the submonoid of $V[n]$ of all $f\in V[n]$ such that
$(0,f)$ is strongly bounded.

\begin{lemma}{\label{weakamenability}} There is a finitely additive
$I: SV[n] \rightarrow [0,\infty )$ with $I(0)=0$, such that
$I$ is invariant under isomorphism, and $I(f)>0$
for all $f \in SV[n]$ for which $\st{(0,f)}\subseteq \R^{n+1}$
has nonempty interior.
\end{lemma}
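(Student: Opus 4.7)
The plan is to set $I(f) := \mu^{(n+1)}((0,f))$ for $f \in SV[n]$, where $\mu^{(n+1)}$ denotes the Berarducci--Otero measure on strongly bounded definable subsets of $R^{n+1}$. Since $(0,f)$ is strongly bounded, $I(f)$ is finite, and by Theorem~4.3 of~\cite{bo} we have $I(f) = \lambda(\st(0,f))$, where $\lambda$ denotes Lebesgue measure on $\R^{n+1}$. Then $I(0) = 0$ is immediate, and if $\st(0,f)$ has nonempty interior, then $I(f) = \lambda(\st(0,f)) > 0$, since Lebesgue measure is positive on nonempty open sets.

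For finite additivity, given $f, g \in SV[n]$, I would use Corollary~\ref{derivatives} together with Corollary~\ref{reduction} to obtain a good decomposition $\mathcal{D}$ of $\R^n$ such that on every open $D \in \mathcal{D}$ contained in the relevant region, each of $f, g, f+g$ induces a continuous function $\bar f_D, \bar g_D, \overline{f+g}_D$ on $D$; one checks directly from the definition of ``induces'' that $\overline{f+g}_D = \bar f_D + \bar g_D$ on each such $D$. Non-open cells have dimension $<n$, so their products with $\R$ contribute nothing to $\lambda$. On each open $D$, Fubini gives
$$\lambda(\st(0,f+g) \cap (D \times \R)) = \int_D (\bar f_D + \bar g_D) = \int_D \bar f_D + \int_D \bar g_D,$$
and similar formulas hold for $f$ and $g$ separately. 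Summing over the open cells of $\mathcal{D}$ yields $I(f+g) = I(f) + I(g)$.

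For isomorphism invariance, let $\psi \colon U \to V$ be an isomorphism from $f$ to $g$, so that $f(x) = |J\psi(x)| \cdot g(\psi(x))$ for almost all $x \in U$. The key idea is to lift $\psi$ to a map in one higher dimension: define $\Psi \colon U \times R \to V \times R$ by $\Psi(x,y) := (\psi(x),\, y/|J\psi(x)|)$. A block-triangular Jacobian computation shows $\Psi$ is a definable $C^1$-diffeomorphism with $|J\Psi| \equiv 1$, and $\Psi$ maps $(0,f) \cap (U \times R)$ bijectively onto $(0,g) \cap (V \times R)$. Because $\supp f \subseteq_0 U$ and $\supp g \subseteq_0 V$, and both functions lie in $SV[n]$, passing to standard parts of these restrictions introduces at most a set of $\lambda$-measure zero. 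I would then apply Corollary~\ref{derivatives} to each coordinate function of $\Psi$, refining using Corollary~\ref{reduction}, to obtain a good decomposition of $\R^{n+1}$ whose open cells lie inside $\st(U \times R)$ and on which $\Psi$ induces a $C^1$ map $\bar\Psi$ whose partial derivatives are the standard parts of those of $\Psi$. In particular, $|J\bar\Psi| = \st|J\Psi| = 1$. Since $\Psi$ is a diffeomorphism, so is $\bar\Psi$ on each open cell, and classical change of variables gives $\lambda(\st(0,f)) = \lambda(\st(0,g))$, i.e., $I(f) = I(g)$.

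The hard step is isomorphism invariance. Finite additivity is essentially Fubini once good cell decomposition is in hand, but isomorphism invariance relies crucially on Corollary~\ref{derivatives}---the fact that standard parts of partial derivatives agree almost everywhere with the partial derivatives of standard parts---to conclude that the induced map $\bar\Psi$ is a volume-preserving $C^1$-diffeomorphism almost everywhere. Care is also needed to verify that the ``$\subseteq_0$'' conditions, the ``almost all'' exceptional sets in the hypothesis on $\psi$, and the boundaries between cells all contribute only sets of $\lambda$-measure zero, so they do not affect the final equality $\lambda(\st(0,f)) = \lambda(\st(0,g))$.
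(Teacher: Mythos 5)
Your definition of $I$, the positivity/finiteness argument via $\lambda(\st(0,f))$, and the Fubini argument for additivity are essentially equivalent to the paper's (the paper sets $I(f)=\sum_D\int_D f_D$ over open good cells; this equals $\mu^{(n+1)}((0,f))=\lambda(\st(0,f))$, so these are the same quantity and the additivity argument is the same in substance). The interesting divergence is in isomorphism invariance, where you take a genuinely different route: the paper stays in dimension $n$, applies Corollary~\ref{derivatives} to the components of $\phi$ and $\phi^{-1}$ to get a $C^1$-diffeomorphism $\hat\phi:\hat U\to\hat V$ with $\partial\hat\phi_i/\partial x_j=\st(\partial\phi_i/\partial x_j)$ a.e., deduces $\hat f=|J\hat\phi|\cdot(\hat g\circ\hat\phi)$ a.e., and finishes with the $n$-dimensional change-of-variables formula; you instead lift to dimension $n+1$ via $\Psi(x,y)=(\psi(x),\,y/|J\psi(x)|)$, which has $|J\Psi|\equiv 1$, so that the final change of variables is by a measure-preserving map. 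This is a clean idea, but it incurs two costs the paper's approach avoids, and you do not address either. First, $\Psi$ involves $J\psi$, so for $\Psi$ to be $C^1$ you need $\psi$ to be $C^2$; the definition of isomorphism only guarantees $C^1$. This is repairable by $C^k$-cell decomposition in $R$ (pass to an open definable set on which $\psi$ is $C^2$, discarding a set of dimension $<n$, which is harmless under the ``almost all'' conventions), but it has to be said, and it is precisely the extra level of derivative that the paper's formulation is designed to avoid. Second, Corollary~\ref{derivatives} requires a strongly bounded graph, and $U\times R$ is not strongly bounded; one must restrict $\Psi$ to a strongly bounded open neighborhood of (most of) $(0,f)$ and verify the image is strongly bounded (which follows because $y/|J\psi(x)|<g(\psi(x))$ a.e. and $(0,g)$ is strongly bounded), again modulo a lower-dimensional set. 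Also note your claim that $\Psi$ maps $(0,f)\cap(U\times R)$ bijectively onto $(0,g)\cap(V\times R)$ is only true off a lower-dimensional set, since $f=|J\psi|\cdot(g\circ\psi)$ holds only for almost all $x$; you acknowledge such null sets later, but the initial bijectivity claim as stated is too strong. With these repairs the lifting argument goes through, but as written there is a genuine gap around the $C^2$ and strong-boundedness hypotheses needed to invoke Corollary~\ref{derivatives} on $\Psi$.
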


\begin{proof} Define $I: SV[n] \rightarrow [0,\infty )$ as
follows. Let $f \in SV[n]$, and take a good decomposition $\mathcal{D}$
of $\R^n$ such that  $f$ induces a function $f_D : D \rightarrow \R$
for every open $D \in \mathcal{D}$, and put $I(f) :=
\sum_{D}\int_{D} f_D$ where $\int$ is the Lebesgue integral and the
sum is taken over all open $D \in \mathcal{D}$. It is easy to see
that $I(f)$ is independent of the choice of such $\mathcal{D}$, and
that $0< I(f)< \infty $ if $\st{(0,f)}$
has nonempty interior in $\R^{n+1}$. It is also clear that $I$ is
finitely additive and
$I(0)=0$. Thus it is left to show that $I(f)=I(g)$
whenever $f,g \in SV[n]$ are isomorphic.

So let $f,g \in SV[n]$ be isomorphic.
Take a good decomposition $\mathcal{D}$ of $\R^n$ such that $f,g$
induce functions $f_D , g_D : D \rightarrow \R$ for every open $D
\in \mathcal{D}$. We define functions
$\hat{f}, \hat{g}: \R^n \to \R$ by
\begin{align*} \hat{f}(x)&= f_D(x) \text{ and }\hat{g}(x)=g_D(x) \text{ if $x\in D$ and $D\in \mathcal{D}$ is open},\\
  \hat{f}(x)&=\hat{g}(x)=0 \text{ if $x\notin D$ for all open
$D\in \mathcal{D}$}.
\end{align*}
Then $\hat{f}$ and $\hat{g}$ are $\R$-bounded with compact support
and definable in $\R_{\text{ind}}$. It is enough to show that
$\int \hat{f}=\int \hat{g}$.

Take a definable $C^1$-diffeomorphism
$\phi=(\phi_1,\dots,\phi_n) : U \rightarrow V$
where $U,V$ are open subsets of $R^n$ with $\supp{f} \subseteq_0
U$, $\supp{g} \subseteq_0 V$ and
$$f (x)= |J \phi (x)| \, g (\phi
(x)) \text{ for almost all }x \in U.$$ Note that $\phi{(\supp{f })}
=_0 \supp{g}$. So after replacing $\phi$ with
$\phi|_{\interior{\supp{f} \setminus Y}}$, where $Y$ is some
definable subset of $\supp{f}$ of dimension $<n$, we may assume that
the graph of $\phi$ is a strongly bounded subset of $R^{2n}$. Then,
applying Corollary \ref{derivatives} to the components of $\phi$ and
$\phi^{-1}$, we obtain open subsets $\hat{U} , \hat{V}$ of $\R^n$,
definable in $\R_{\text{ind}}$, such that each $\phi_i$ induces a
$C^1$-function $\hat{\phi}_i: \hat{U} \to \R$, and
$\hat{\phi}=(\hat{\phi}_1,\dots,\hat{\phi}_n) : \hat{U} \to \R^n$ is
a $C^1$-diffeomorphism onto its image $\hat{\phi}(\hat{U})=\hat{V}$,
each $\frac{\partial
\phi_i }{\partial x_j }$ induces $\frac{\partial \hat{\phi_i}}{\partial x_j }$
and $\supp{\hat{f} } \subseteq_0 \hat{U}$, $\supp{\hat{g }}
\subseteq_0 \hat{V}$. Then for almost all $x \in \hat{U}$ we have (taking
$y\in \mathcal{O}^n$ such that $\st{y}=x$),
$$\hat{f} (x) = \st{f (y)}=\st{|J\phi (y)|} \st{g (\phi (y))} =
|J\hat{\phi} (x)| \hat{g} (\hat{\phi} (x)),$$ hence
$\int \hat{f}=\int \hat{g}$.
\end{proof}

\bigskip\noindent
We let $B[n]$ be the collection of all bounded definable subsets of
$R^n$. Let $X,Y \in B[n]$. Then an {\em isomorphism\/} $\psi :
X \rightarrow Y $ is defined to be a definable $C^1$-diffeomorphism $\psi : U
\rightarrow V$, where $U,V \subseteq R^n$ are open and definable, $X
\subseteq_0 U$, $Y \subseteq_0 V$, $|J \psi (x)| = 1$ for almost all
$x \in X \cap U$, and $\psi (X \cap U) =_0 Y$. Note that $\psi$ is
an isomorphism $X \rightarrow Y$ iff it is an isomorphism $\chi^{}_X
\rightarrow \chi^{}_Y$. (Here $\chi^{}_X: R^n \to R$ is the characteristic
function of $X$.)
An {\em $n$-measure\/} is a finitely additive, isomorphism invariant
$\mu : B[n] \rightarrow [0,\infty ]$ with $\mu (\emptyset )=0$.

It is straightforward that an $(n+1)$-measure $\mu$ yields an
$n$-volume $I$ by $I(f):= \mu (0,f)$ for $f \in V[n]$, that an
$n$-volume $I$ gives an $n$-measure $\mu$ by putting $\mu
(X):=I(\chi^{}_X )$, and that $R$ being amenable for volumes is
equivalent to having for every $n$ and every $X \in B[n]$ with
nonempty interior an $n$-measure $\mu$ with $0<\mu (X)<\infty$.

Let $SB[n]$ be the collection of all strongly bounded definable
subsets of $R^n$. The proof of Lemma \ref{weakamenability} shows
that the finitely additive measure
$\mu=\mu^{(n)}: SB[n] \rightarrow [0, \infty )$ from \cite{bo}
is invariant under isomorphism; it also has the property that
$\mu (X)>0$ for all $X \in SB[n]$ such that
$\st{X}$ has nonempty interior.

\begin{theorem} There is for each $n$ an $n$-volume $I$ such that
$0<I(f) < \infty$ for all $f \in SV[n]$ for which $\st{(0,f)}$ has nonempty
interior in $ \R^{n+1}$.
\end{theorem}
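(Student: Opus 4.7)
The plan is to extend the Berarducci--Otero measure to all bounded definable sets by a standard-part operation on the $R$-valued Lebesgue measure $\mu^R$. For $f\in V[n]$ define
$$I(f)\ :=\ \st\bigl(\mu^R((0,f))\bigr)\ \in\ [0,\infty],$$
where $(0,f)=\{(x,t)\in R^{n+1}:0<t<f(x)\}$ is bounded in $R^{n+1}$ (as $f$ is bounded with bounded support), and $\st:R^{\ge 0}\to[0,\infty]$ is extended to send elements outside $\mathcal{O}$ to $+\infty$. Then $I(0)=0$, and finite additivity of $I$ follows from additivity of $\mu^R$ together with the fact that $\st$ is an additive monoid homomorphism $\mathcal{O}\to[0,\infty)$, with $+\infty$ absorbing.

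For isomorphism invariance, let $\psi:U\to V$ be an isomorphism $f\to g$, so that $f(x)=|J\psi(x)|\,g(\psi(x))$ for almost all $x\in U$. Then
$$\Psi:(0,f)\cap(U\times R)\longrightarrow(0,g)\cap(V\times R),\qquad \Psi(x,t):=\bigl(\psi(x),\,t/|J\psi(x)|\bigr),$$
is a definable $C^1$-diffeomorphism whose Jacobian determinant is identically $1$; since $\supp f\subseteq_0 U$ and $\supp g\subseteq_0 V$, the parts of $(0,f)$ and $(0,g)$ lying outside $U\times R$ and $V\times R$ respectively have dimension $\le n$ and are therefore $\mu^R$-negligible. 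Hence $\mu^R((0,f))=\mu^R((0,g))$ and $I(f)=I(g)$.

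It remains to show $I|_{SV[n]}=I_0$, where $I_0$ is the $n$-volume on $SV[n]$ provided by (the proof of) Lemma~\ref{weakamenability}; this delivers $0<I(f)<\infty$ whenever $f\in SV[n]$ and $\st(0,f)$ has nonempty interior. For $f\in SV[n]$ the set $(0,f)$ is strongly bounded, so $\mu^R((0,f))\in\mathcal{O}$ and $I(f)$ is finite, and the required identification $\st(\mu^R((0,f)))=\int_{\R^{n}}\hat f=I_0(f)$ reduces by approximation of $f$ from below by step functions on $\Q$-boxes to the Berarducci--Otero equality $\mu^{(n+1)}(Y)=\lambda(\st Y)$ for $Y\in SB[n+1]$ that already underlies the proof of Lemma~\ref{Qdensitylemma}. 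The main technical point is precisely this last identification: a monotone-class-style argument transfers the set-wise identity $\st(\mu^R(Y))=\lambda(\st Y)$ from $\Q$-boxes (where it is manifest) to arbitrary strongly bounded definable subsets of $R^{n+1}$, and then to the subgraphs $(0,f)$, in parallel with the construction of $\mu^{(n+1)}$ in \cite{bo}.
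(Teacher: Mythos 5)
The proposal rests on an object that does not exist in general: an ``$R$-valued Lebesgue measure'' $\mu^R$ on bounded definable subsets of $R^{n+1}$. Since $R$ is a proper (saturated, hence non-Archimedean) real closed field, it is not order-complete, so Riemann--Darboux integration is unavailable, and in a general o-minimal expansion of a real closed field definable functions need not have definable antiderivatives (for example $x\mapsto 1/x$ when $R$ carries no structure beyond the ordered field). There is therefore no canonical finitely additive $R$-valued volume on bounded definable sets; the Berarducci--Otero construction, which you invoke at the end, produces only an $\R$-valued measure on \emph{strongly} bounded definable sets, precisely because those are the sets whose $\Q$-Riemann sums converge in $\R$. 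Thus $I(f)=\st\big(\mu^R((0,f))\big)$ is undefined from the outset, and even if some $\mu^R$ were somehow exhibited, the change-of-variables step $\mu^R(\Psi(A))=\mu^R(A)$ for a $C^1$-diffeomorphism $\Psi$ with $|J\Psi|\equiv 1$ would be a further substantive claim about $\mu^R$ requiring its own proof inside $R$.

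The paper sidesteps both difficulties. It first shows, in Lemma~\ref{weakamenability}, that the Berarducci--Otero measure $\mu^{(n)}$ on $SB[n]$ is isomorphism-invariant; the essential input there is Corollary~\ref{derivatives}, that the partial derivatives of a definable $f$ induce the partial derivatives of the induced real function, which reduces invariance to the genuine change-of-variables theorem for Lebesgue integration on $\R^n$. It then extends $\mu^{(n)}$ from $SB[n]$ to all of $B[n]$ by setting $\mu^{\ast}(X):=\mu^{(n)}(Y)$ when $X$ is isomorphic to some $Y\in SB[n]$ and $\mu^{\ast}(X):=\infty$ otherwise, and verifies finite additivity of $\mu^{\ast}$ directly by restricting isomorphisms. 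No $R$-valued integration appears. Your unimodular map $\Psi(x,t)=(\psi(x),\,t/|J\psi(x)|)$ is the right device for translating between $n$-volumes and $(n+1)$-measures, but it has to be applied, after passing to standard parts, to Lebesgue measure on $\R^{n+1}$ rather than to a hypothetical $\mu^R$; that is in effect what the proof of Lemma~\ref{weakamenability} does by working with the induced real functions $\hat f,\hat g,\hat\phi$.
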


\begin{proof}
By the above remarks it suffices to show that for all $n$ the
finitely additive $\mu=\mu^{(n)}:SB[n] \rightarrow [0,\infty )$
extends to an $n$-measure. We extend $\mu$ to $\mu^{\ast}: B[n]
\rightarrow [0,\infty ]$ as follows: if $X \in B[n]$ is isomorphic
to $Y \in SB[n]$, then $\mu^{\ast} (X) := \mu (Y)$; if $X \in B[n]$
is not isomorphic to any $Y\in SB[n]$, then $\mu^{\ast}(X):=
\infty$. Clearly, $\mu^{\ast} (\emptyset )=0$ and $\mu^{ \ast }$ is
invariant under isomorphism. We claim that $\mu^{\ast}$ is finitely
additive, and thus an $n$-measure. Let $X,Y \in B[n]$ be disjoint;
we need to show that then $\mu^{\ast}(X) + \mu^{\ast}(Y) =
\mu^{\ast}(X \cup Y)$. We can reduce to the case where $X \cup Y$ is
isomorphic to $Z$ where $Z \in SB[n]$; it remains to show that then
there are $X', Y' \in SB[n]$ isomorphic to $X$ and $Y$,
respectively. Let $\psi : U \rightarrow V$ be an isomorphism $X \cup
Y \to Z$; so $X \cup Y \subseteq_0 U$ and $Z \subseteq_0 V$. Then
$$ \psi |_{\interior{X \cap U}}:\  X\to X':=\psi (\interior{X \cap U})\cap Z$$ is an isomorphism and $X'\in SB[n]$, and likewise with $Y$.
\end{proof}

\end{section}

\begin{ack}
I would like to thank Lou van den Dries for many useful suggestions
concerning both the form and the contents of the current paper. It
was he who conjectured (while watching Letterman) Theorem 1.1.
Thanks also to Clifton F. Ealy for repeating a useful observation,
and to Kobi Peterzil for many helpful discussions and his
hospitality during a visit in Haifa. Lou van den Dries would like to
thank the author and Clifton F. Ealy for regularly playing pool with
him.
\end{ack}


\begin{thebibliography}{99}
\bibitem{bp} Y. Baisalov, B. Poizat, \textit{Paires de Structures
O-Minimales.} J. Symb. Log. 63(2): 570-578 (1998).

\bibitem{bo} A. Berarducci, M. Otero, \textit{An additive measure
in o-minimal expansions of fields}, Q. J. of Math. 55: 411-419
(2004).

\bibitem{lou} L. van den Dries, \textit{Tame topology and o-minimal
structures}, London Mathematical Society Lecture Note Series, vol.
248, Cambridge University Press (1998).

\bibitem{tconvex} L. van den Dries, \textit{T-convexity
and tame extensions II}, J. Symb. Log. 62(1): 14-34 (1997). Erratum,
J. Symb. Log. 63: 1597 (1998).

\bibitem{nip} E. Hrushovski, Y. Peterzil, A. Pillay, \textit{Groups,
measures and the NIP}, to appear in Journal of the AMS.




\end{thebibliography}
\end{document}